\numberwithin{equation}{section}
\newtheorem{theoremcounter}{theoremcounter}[section]
\newtheorem{corollary}[theoremcounter]{Corollary}
\newtheorem{definition}[theoremcounter]{Definition}
\newtheorem{example}[theoremcounter]{Example}
\newtheorem{lemma}[theoremcounter]{Lemma}
\newtheorem{proposition}[theoremcounter]{Proposition}
\newtheorem{remark}[theoremcounter]{Remark}
\newtheorem{remarks}[theoremcounter]{Remarks}
\newtheorem{theorem}[theoremcounter]{Theorem}
\newcommand{\ignore}[1]{\relax}
\newcommand{\nbd}{\nobreakdash-\hspace{0pt}}
\renewcommand{\frak}{\ensuremath{\mathfrak}}
\newcommand{\bboard}{\ensuremath{\mathbb}}
\newcommand{\frakc}{\ensuremath{\frak{c}}}
\newcommand{\frake}{\ensuremath{\frak{e}}}
\newcommand{\frakf}{\ensuremath{\frak{f}}}
\newcommand{\bbM}{\ensuremath{\bboard M}}
\newcommand{\bbP}{\ensuremath{\bboard P}}
\newcommand{\bbS}{\ensuremath{\bboard S}}
\newcommand{\rmM}{\ensuremath{\mathrm{M}}}
\newcommand{\rmS}{\ensuremath{\mathrm{S}}}
\newcommand{\amid}{\ensuremath{\mathop{\mid}}}
\newcommand{\ZZ}{\ensuremath{\mathbb{Z}}}
\newcommand{\QQ}{\ensuremath{\mathbb{Q}}}
\newcommand{\RR}{\ensuremath{\mathbb{R}}}
\newcommand{\CC}{\ensuremath{\mathbb{C}}}
\renewcommand{\Re}{\ensuremath{\mathop{\mathfrak{Re}}}}
\newcommand{\isdiv}{\amid}
\renewcommand{\pmod}[1]{\ensuremath{\;(\mathrm{mod}\; #1)}}
\newcommand{\sgn}{\ensuremath{\mathrm{sgn}}}
\newcommand{\SL}[1]{\ensuremath{\mathrm{SL}_{#1}}}
\newcommand{\Mp}[1]{\ensuremath{\mathrm{Mp}_{#1}}}
\newcommand{\T}{\ensuremath{\mathrm{t}}}
\newcommand{\rT}{\ensuremath{{}^\T\hspace{0.03em}}}
\newcommand{\slashdiv}{\ensuremath{\mathop{/}}}
\newcommand{\HS}{\mathbb{H}}
\newcommand{\td}{\tilde}
\newcommand{\ov}{\overline}
\begin{document}

\title{Holomorphic projections and Ramanujan's mock theta functions}

\author{Özlem Imamo\u glu}
\address{ETH Zürich, Mathematics Dept., CH-8092, Zürich, Switzerland}
\email{ozlem@math.ethz.ch}

\author{Martin Raum}
\address{ETH Zürich, Mathematics Dept., CH-8092, Zürich, Switzerland}
\email{martin.raum@math.ethz.ch}
\urladdr{http://www.raum-brothers.eu/martin/}

\author{Olav K. Richter}
\address{Department of Mathematics\\University of North Texas\\ Denton, TX 76203\\USA}
\email{richter@unt.edu}

\thanks{The first author was partially supported by SNF grant 200021-132514.  The second author was partially supported by the ETH Zurich Postdoctoral Fellowship Program and by the Marie Curie Actions for People COFUND Program.  The third author was partially supported by Simons Foundation Grant $\#200765$}

\subjclass[2010]{Primary 11F30, 11F37; Secondary 11F27}
\keywords{mock theta functions, holomorphic projection, relations of Fourier series coefficients}


\begin{abstract}
We employ spectral methods of automorphic forms to establish a holomorphic projection operator for tensor products of vector-valued harmonic weak Maass forms and vector-valued modular forms.  We apply this operator to discover simple recursions for Fourier series coefficients of Ramanujan's mock theta functions.
\end{abstract}
\maketitle

\section{Introduction}

Mock theta functions have a long history but recent work establishes surprising connections with different areas of mathematics and physics.  
For example, they impact the theory of Donaldson invariants of $\CC\mathbb{P}^2$ that are related to gauge theory (for example, see \cite{Ono-Mal-GeomTop12, Ono-Mal-CNTP12, Ono-Gri-Mal-Forum}), they are intimately linked to the Mathieu and umbral moonshine conjectures (see \cite{Egu-Oog-Tac} and \cite{Ch-Du-Ha}), and they play an important role in the study of quantum black holes and mock modular forms (see~\cite{DMZ}).  For a good overview of mock theta functions, see \cite{Z-Bourbaki} or \cite{Ono-CDM08}.
 
A highlight in the theory of mock theta functions is Zwegers's~\cite{Zwe-thesis, Zwe-Contemp01} ``completion'' of mock theta functions to real-analytic vector-valued modular forms.
That completion of mock theta functions has led to several applications such as the solution of the Andrews-Dragonette conjecture in~\cite{B-O-Invent06}, which provides an explicit formula for the Fourier series coefficients of the third order mock theta function
\begin{gather*}
  f(q)
:=\sum_{n=0}^{\infty}c(f;n)q^n:=
  1 + \sum_{n=1}^{\infty} \frac{q^{n^2}}{(1 + q)^2(1 + q^2)^2 \cdots (1 + q^n)^2}\text{.}
\end{gather*}
The formula for the coefficients $c(f;n)$ in~\cite{B-O-Invent06} is given as an infinite series of Kloosterman sums and $I$-Bessel functions, and closely resembles Rademacher's series representation of the partition function.  In particular, the terms that occur are transcendental.  

In this paper, we determine simple finite recursions for Fourier series coefficients of mock theta functions that depend only on divisor sums, and where all occurring terms are rational.  Our results are in the spirit of Hurwitz's~\cite{Hur-MathAnn1885} class number relations (see also~\cite{Hi-Za-Invent76}):
\begin{gather}
\label{eq:Hurwitz-relations}
 \sum_{\substack{ m\in\ZZ \\
                   m^2\leq 4N}}H(4N-m^2)
=
  2 \sigma(N)
  -
  \sum_{\substack{a,b \in \ZZ \\
                   a,b > 0 \\
                   N=ab }}
           \min(a,b)\text{,}
\end{gather}
where $H(N)$ is the class number of positive definite binary quadratic forms of discriminant $-N$ and $\sigma(n):=\sum_{0 < d \isdiv n} d$.  Specifically, we prove the following relations for the Fourier series coefficient $c(f;\, n)$ of~$f(q)$, where we use the conventions that $ \sigma(n) = 0$, if~$n\not\in\ZZ$, and that $\sgn^+(n) := \sgn(n)$ for $n \ne 0$ and $\sgn^+(0) := 1$.
\begin{theorem}
\label{thm:relations-of-mock-theta-coefficients}
Fix $0 < n \in \ZZ$, and for $a, b \in \ZZ$ set $N := \frac{1}{6}(-3 a + b - 1)$ and ${\td N} := \frac{1}{6} (3 a + b - 1)$.  Then
\begin{gather}
\label{eq:thm:relations-of-mock-theta-coefficients:first-relation}
  \sum_{\substack{ m \in \ZZ \\ 3m^2+m\leq2n}}
  \hspace{-0.9em}
  (m + \tfrac{1}{6})\, c(f; n - \tfrac{3}{2} m^2 - \tfrac{1}{2} m)
\;=\;
  \tfrac{4}{3} \sigma(n) - \tfrac{16}{3} \sigma(\tfrac{n}{2})
  - 2 \sum_{\substack{ a, b \in \ZZ \\2n = a b}}
    \sgn^+(N)\, \sgn^+({\td N})\;
    \big( |N + \tfrac{1}{6}| - |{\td N} + \tfrac{1}{6}| \big)
\text{,}
\end{gather}
where the sum on the right hand side runs over $a, b$ for which $N, {\td N} \in \ZZ$.
\end{theorem}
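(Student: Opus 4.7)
The plan is to apply the holomorphic projection operator constructed earlier in the paper to a tensor product of Zwegers's completion of $f$ with a unary theta series of weight $3/2$ that is tailor-made so that the left hand side of \eqref{eq:thm:relations-of-mock-theta-coefficients:first-relation} appears as a single Fourier coefficient.

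First, I would recall that $f(q)$ is the holomorphic part of Zwegers's vector-valued harmonic weak Maass form $\hat f = f + g^{*}$ of weight $1/2$, whose shadow is the explicit unary theta function $g$ of weight $3/2$. I would then introduce the weight-$3/2$ unary theta series
\begin{gather*}
  \vartheta(\tau) := \sum_{m \in \ZZ} (m + \tfrac{1}{6})\, q^{\tfrac{3}{2} m^2 + \tfrac{1}{2} m}\text{,}
\end{gather*}
chosen so that the $q^n$-coefficient of the holomorphic product $f \cdot \vartheta$ equals exactly the left hand side of \eqref{eq:thm:relations-of-mock-theta-coefficients:first-relation}. Since $\hat f$ has weight $1/2$ and $\vartheta$ has weight $3/2$, the tensor product $\hat f \otimes \vartheta$ transforms as a vector-valued real-analytic modular form of weight $2$ on a congruence subgroup of level dividing $12$, and it satisfies the growth hypotheses of the holomorphic projection theorem established earlier in the paper.

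Second, I would evaluate $\pi_{\mathrm{hol}}(\hat f \otimes \vartheta)$ in two different ways and compare. On the one hand, $\pi_{\mathrm{hol}}(\hat f \otimes \vartheta)$ is a genuine holomorphic modular form of weight $2$ on the above group, and one checks (by a dimension argument) that the relevant component lies in a finite-dimensional space of quasi-modular Eisenstein series whose Fourier coefficients are rational linear combinations of $\sigma(n)$ and $\sigma(n/2)$. On the other hand, the ``trivially holomorphic'' piece of $\pi_{\mathrm{hol}}(\hat f \otimes \vartheta)$ is $f \otimes \vartheta$, contributing the left hand side of \eqref{eq:thm:relations-of-mock-theta-coefficients:first-relation}, while the ``shadow piece'' $\pi_{\mathrm{hol}}(g^{*} \otimes \vartheta)$ contributes a Petersson-style pairing of $g$ against $\vartheta$ with the explicit non-holomorphic weight-$2$ projection kernel, producing a doubly-indexed lattice sum.

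Third, I would reduce the lattice sum via partial summation. Reorganizing the indices of $g$ and $\vartheta$ by the product $2n = ab$ and applying a Zagier-style partial summation in the spirit of the derivation of \eqref{eq:Hurwitz-relations} collapses the double sum to $-2 \sum_{2n = ab} \sgn^{+}(N)\, \sgn^{+}(\tilde N)\, \bigl(|N + \tfrac{1}{6}| - |\tilde N + \tfrac{1}{6}|\bigr)$, and the boundary terms together with the contributions from the two cusps of $\Gamma_0(2)$ furnish the Eisenstein coefficients $\tfrac{4}{3}\sigma(n) - \tfrac{16}{3}\sigma(n/2)$.

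The main obstacle will be the third step: reducing the explicit lattice double sum produced by the projection kernel to the clean $\sgn^{+}$ form on the right hand side. The manipulation is structurally analogous to Zagier's derivation of \eqref{eq:Hurwitz-relations}, but is more delicate here because both theta series are odd and unary and because the weight-$2$ holomorphic projection is not absolutely convergent, so that its regularization must be tracked through the partial summation. Keeping separate account of the width-$2$ cusp contribution, which manifests as the $\sigma(n/2)$ term, will require the most care.
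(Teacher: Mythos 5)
Your plan is in essence the paper's proof: complete $f$ to Zwegers's vector-valued harmonic Maass form $F=F^++F^-$ of weight $\tfrac12$, tensor it with the weight-$\tfrac32$ shadow theta vector $G$ (your $\vartheta$ is, up to sign and a factor of $q^{1/24}$, its first component $G_0$), apply the weight-$2$ holomorphic projection, identify $\pi_{\rm hol}(FG)$ as a quasimodular Eisenstein series, and equate Fourier coefficients. The architecture is sound and would prove the theorem.

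However, your third step misdescribes how the argument closes, in two ways. First, no partial summation and no regularization are needed to evaluate $\pi_{\rm hol}(F^-G)$: since $k=\tfrac12<1$ and $k+l=2$, the coefficient integral $\int_0^\infty\Gamma(\tfrac12,4\pi|m|y)\,e^{-4\pi ny}\,dy$ converges absolutely and evaluates in closed form to a multiple of ${}_2F_1(1,\tfrac32,2;\tfrac{n}{\td m})$, which is elementary and yields $(\sqrt{\td m}-\sqrt{|m|})\big/\sqrt{\td m\,|m|}$; the passage to the sum over $2n=ab$ is then a direct reindexing of the supports of the two unary theta series (via $a=\td N-N$, $b=3(\td N+N)+1$), producing exactly the $\sgn^+$ sum with no boundary terms left over. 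Second, the divisor sums $\tfrac43\sigma(n)-\tfrac{16}{3}\sigma(\tfrac n2)$ do not arise from that lattice-sum reduction or from ``cusp contributions'' to it; they come entirely from the other side of the identity, namely from pinning down $\pi_{\rm hol}(FG)=-\tfrac16E$ where $E$ is an explicit combination of $E_2$ and $E_2^{[2]}$ (this is where the weight-$2$ quasimodularity on the trivial isotypic component of $\rho_3\otimes\ov{\rho_3}$ enters). Since your step two already assigns the $\sigma$-terms to the Eisenstein identification, your outline as written double-counts them. A further practical point: the identification of $E$ by its constant term alone is what the vector-valued decomposition buys you --- each irreducible constituent of $\rho_3\otimes\ov{\rho_3}$ carries a one-dimensional space of weight-$2$ (quasi)modular forms --- whereas on a scalar congruence subgroup the weight-$2$ Eisenstein space is larger and the ``dimension argument'' you invoke would require matching several coefficients, not just the constant one.
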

In Theorem~\ref{thm:relations-of-mock-theta-coefficients-2} we give a similar formula if $n \in \frac{1}{2} \ZZ$ and also relations for the Fourier series coefficients of the mock theta function~$\omega(q)$.  

\begin{example}
Observe that all sums in Theorems~\ref{thm:relations-of-mock-theta-coefficients} and~\ref{thm:relations-of-mock-theta-coefficients-2} are finite, and that only a few terms are needed to find the actual Fourier series coefficients of~$f$.  For example, \eqref{eq:thm:relations-of-mock-theta-coefficients:first-relation} implies that
\begin{multline*}
  -\tfrac{5}{6} c(f;\, 0) + \tfrac{1}{6} c(f;\, 1)
\\
=\;
  \tfrac{4}{3} \sigma(1)
  - \tfrac{16}{3} \sigma( \tfrac{1}{2} )
  - 2 \Big(
    \sgn^+(-1)\, \sgn^+(1) \big(|-1 + \tfrac{1}{6}| - |1 + \tfrac{1}{6}|\big)
    \;+\;
    \sgn^+(0)\, \sgn^+(-1) \big(|0 + \tfrac{1}{6}| - |-1 + \tfrac{1}{6}|\big)
  \Big)
\text{,}
\end{multline*}
showing that $c(f;\, 1) = 1$ (using that $c(f;\, 0) = 1$).
\end{example}
\begin{remarks}
\begin{enumerate}
\item
Jeremy Lovejoy pointed out to us that simple finite recursions for Fourier series coefficients of mock theta functions that depend only on divisor sums can sometimes also be furnished by Appell sums, since these are typically expressible in terms of divisors.  However, it is not clear if  Theorems~\ref{thm:relations-of-mock-theta-coefficients} and~\ref{thm:relations-of-mock-theta-coefficients-2} could be obtained using this idea.

\item
Let $1 < M$ be an odd integer.  Ken Ono indicated to us that Theorem~\ref{thm:relations-of-mock-theta-coefficients} implies that
\begin{gather}
\label{eq:estimates}
  \# \big\{ \text{$n < X$} \,:\, c(f;\, n) \not\equiv 0 \pmod{M} \big\}
\gg
  \frac{\sqrt{X}}{\log X}
\text{.}
\end{gather}
The case $M = 2$, which we have excluded, can be deduced from work of \cite{B-Y-Z-Crelle04}. Scott Ahlgren mentioned to us that it could also be derived from \cite{Ni-Ru-Sa-JNT98}, since $f(q)$ is congruent modulo $2$ to the generating function for the partition function.  For odd $M$, one can apply Theorem~\ref{thm:relations-of-mock-theta-coefficients} as follows:  If $n \ge 7$ is prime, then it is easy to verify that the right hand side of~\eqref{eq:thm:relations-of-mock-theta-coefficients:first-relation} equals $\frac{4}{3} (n + 4)$.  With the help of Dirichlet's prime number theorem one finds that asymptotically \mbox{$(1 - \phi(M)^{-1})\,  X \slashdiv  \log X$} primes $n < X$ give a non-vanishing right hand side of~\eqref{eq:thm:relations-of-mock-theta-coefficients:first-relation}, where $\phi$ is the Euler $\phi$\nbd function.  At most $\sqrt{X}$ of such primes are contained in the progression $n - \tfrac{3}{2} m^2 - \tfrac{1}{2} m$, which yields the desired bound.
\end{enumerate}
\end{remarks}

The proofs of Theorems~\ref{thm:relations-of-mock-theta-coefficients} and~\ref{thm:relations-of-mock-theta-coefficients-2} are based on Zwegers's idea of using holomorphic projection of scalar-valued functions to study mock modular forms (see \cite{And-Rho-Zwe-ANT13} for another application of this idea).  We start by extending the concept of holomorphic projection to tensor products of vector-valued harmonic weak Maass forms of weight $k$ and vector-valued modular forms of weight $l$ (see Theorems~\ref{thm:modular-holomorphic-projection} and~\ref{thm:coefficients-of-holomorphic-projection}), where $k+l\geq 2$.  The case $k+l = 2$ is subtle and features vector-valued quasimodular forms.  Our proof relies on the spectral theory of automorphic forms, and is quite different from the proof of the scalar-valued case in~\cite{G-Z-Invent86}.  We apply our results to the mock theta functions $f(q)$ and $\omega(q)$ (in which case $k=\frac{1}{2}$ and $l=\frac{3}{2}$) to obtain the explicit recursions for their Fourier series coefficients in Theorems~\ref{thm:relations-of-mock-theta-coefficients} and~\ref{thm:relations-of-mock-theta-coefficients-2}.
   
Finally, as already hinted by the similarity of the relations in (\ref{eq:Hurwitz-relations}) and (\ref{eq:thm:relations-of-mock-theta-coefficients:first-relation}), our method also allows one to recover the Hurwitz class number relations in~(\ref{eq:Hurwitz-relations}).  It is conceivable that the method applies to even further classes of automorphic forms, but in this paper we only focus on Ramanujan's mock theta functions.

\vspace{1ex}

\noindent
{\it Acknowledgments}:
We thank Scott Ahlgren, Jeremy Lovejoy, Ren\'{e} Olivetto, and Ken Ono for many helpful comments on an earlier version of this paper.

\section{The metaplectic cover and quasimodular Forms}

We briefly introduce some standard notation needed for the definition of vector-valued automorphic forms.  Let $\zeta_r:=e^{\frac{2\pi i}{r}}$, $\HS := \{ \tau = x + i y \in \CC \,:\, y > 0\}$ be the Poincar\'e upper half plane, and $q := e^{2\pi i\tau}$.  The Fourier series coefficients of a periodic function~$F$ on $\HS$ are always denoted by~$c(F;\, n; y)$.  If this coefficient is constant, then we suppress the dependence on~$y$, and write~$c(F;\, n)$.  Recall that the metaplectic cover $\Mp{2}(\ZZ)$ of $\SL{2}(\ZZ)$ is the group of pairs $(g, \omega)$, where $g = \left(\begin{smallmatrix}a & b \\ c & d\end{smallmatrix}\right)\in \SL{2}(\ZZ)$ and $\omega \,:\, \HS \rightarrow \CC,\, \tau \mapsto \sqrt{c \tau + d}$ for a holomorphic choice of the square root, with group law
$(g_1, \omega_1)(g_2, \omega_2):=(g_1 g_2,\, (\omega_1 \circ g_2) \cdot \omega_2)$.  We usually write $\gamma$ for elements in $\Mp{2}(\ZZ)$.  Standard generators of $\Mp{2}(\ZZ)$ are
\begin{gather*}
  T
:=
  \big( \left(\begin{smallmatrix}1 & 1 \\ 0 & 1\end{smallmatrix}\right),\, 1 \big)
\quad\text{and}\quad
  S
:=
  \big( \left(\begin{smallmatrix}0 & -1 \\ 1 & 0\end{smallmatrix}\right),\, \sqrt{\tau} \big)
\text{,}
\end{gather*}
where $\sqrt{\tau}$ is the principle branch mapping $i$ to $\zeta_8$.

Throughout this paper, $\rho$ denotes a finite dimensional, unitary representation of~$\Mp{2}(\ZZ)$.  If $\pm \rho(S)^2$ is the identity, then $\rho$ factors through~$\SL{2}(\ZZ)$.  Let $V(\rho)$ be the representation space of $\rho$, 
and $\langle \cdot\, ,\,\cdot \rangle_\rho$ be the scalar product for which $\rho$ is unitary. 
For fixed half-integer $k$ and for all $\gamma=\left(\left(\begin{smallmatrix}a & b\\c & d\end{smallmatrix}\right), \sqrt{c\tau+d}\right)\in\Mp{2}(\ZZ)$ define the weight~$k$ slash operator of type~$\rho$ on functions $F :\, \HS \rightarrow V(\rho)$:
\begin{gather*}
  \big( F \big|_{k, \rho}\, \gamma\big)(\tau)
:=
  \rho(\gamma)^{-1}\,
   (\sqrt{c\tau+d})^{-2k} F\big( \frac{a \tau + b}{c \tau + d} \big)\text{.}
\end{gather*}
The space $\rmM_{k}(\rho)$ of modular forms of weight~$k$ and type~$\rho$ consists of $|_{k, \rho}$ invariant and holomorphic functions $\HS \rightarrow V(\rho)$ that are bounded at infinity.  Quasimodular forms are important generalizations of modular forms that were introduced in~\cite{Kan-Z}.  A crucial example is the weight $2$ Eisenstein series
\begin{gather} 
\label{eq:E_2}
E_2(\tau):=1 - 24 \sum_{n = 1}^\infty \sigma(n) q^n\text{,}
\end{gather}
whose completion $E_2^*(\tau):=E_2(\tau)-\frac{3}{\pi y}$ is a real-analytic modular form of weight $2$.  We now extend the definition in~\cite{Kan-Z} to the case of vector-valued forms.
\begin{definition}
\label{def:quasimodular}
Let $F :\, \HS \rightarrow V(\rho)$ be a holomorphic function.  Then $F$ is a quasimodular form of weight~$k$ and type~$\rho$, if there is a finite collection~$F_n :\, \HS \rightarrow V(\rho)$ ($0 < n \in \ZZ$) of holomorphic functions such that the following holds:
\begin{enumerate}
\item $(F + \sum_n y^{-n} F_n) \big|_{k, \rho} \, \gamma = (F + \sum_n y^{-n} F_n)$ for all $\gamma \in \Mp{2}(\ZZ)$.
\item $F(\tau) = O(1)$ and $F_n(\tau) = O(1)$ for all $n$ as $y \rightarrow \infty$.
\end{enumerate}
Let $\widetilde{\rmM}_k(\rho)$ be the space of quasimodular forms of weight~$k$ and type~$\rho$. 
\end{definition}
\noindent
 The maximal $n$ with $F_n \ne 0$ in Definition \ref{def:quasimodular} is called the depth of~$F$.  One can show that for this choice of $n$, $F_n$ is a modular form of weight~$k - 2 n$, so that the depth is bounded for fixed~$k$.  We conclude this section with two propositions on quasimodular forms of weight~$2$. 

\begin{proposition}
\label{prop:quasimodular-forms-of-weight-2}
Suppose that $\rho$ is irreducible.  If $\rho$ is the trivial representation, then $\widetilde{\rmM}_2(\rho) = \langle E_2 \rangle$.  Otherwise, $\widetilde{\rmM}_2(\rho) = \rmM_2(\rho)$.
\end{proposition}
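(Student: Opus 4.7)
The plan is to show that every $F \in \widetilde{\rmM}_2(\rho)$ has depth at most $1$ and then analyze the resulting cases. Let $N$ denote the depth of $F$, so $\widehat F := F + \sum_{n=1}^{N} y^{-n} F_n$ is $|_{2,\rho}$-invariant with $F_N \neq 0$. By the remark recorded immediately after Definition~\ref{def:quasimodular}, $F_N$ is a modular form of weight $2 - 2N$ and type $\rho$. For $N \geq 2$, such a form would have weight at most $-2$, be holomorphic, and be bounded at infinity. Using unitarity of $\rho$, the function $y^{(2-2N)/2}\,\|F_N(\tau)\|$ is $\Mp{2}(\ZZ)$-invariant and tends to $0$ at the cusp, and a standard vanishing argument for holomorphic modular forms of negative weight (reducing if necessary to scalar forms on a finite-index subgroup on which $\rho$ trivializes) forces $F_N \equiv 0$, contradicting $N \geq 2$. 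Hence $N \in \{0, 1\}$.

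If $N = 0$, then $F \in \rmM_2(\rho)$ and both desired equalities hold. If $N = 1$, then $F_1 \in \rmM_0(\rho)$ is a constant vector in the invariant subspace $V(\rho)^{\Mp{2}(\ZZ)}$. When $\rho$ is irreducible and nontrivial, irreducibility forces $V(\rho)^{\Mp{2}(\ZZ)} = 0$, so $F_1 = 0$ and $F$ is modular, yielding $\widetilde{\rmM}_2(\rho) = \rmM_2(\rho)$.

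When $\rho$ is the trivial representation, $F_1$ is a scalar $c \in \CC$. Since $E_2$ is quasimodular with completion $E_2 - \tfrac{3}{\pi y}$ by \eqref{eq:E_2}, the form $G := F + \tfrac{\pi c}{3}\, E_2$ has no $y^{-1}$ contribution in its completion; hence $G \in \rmM_2(\bbone) = 0$ (classical vanishing of weight-$2$ holomorphic modular forms on $\SL{2}(\ZZ)$), so $F = -\tfrac{\pi c}{3}\, E_2 \in \langle E_2 \rangle$. The reverse inclusion being clear, $\widetilde{\rmM}_2(\bbone) = \langle E_2 \rangle$.

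The main obstacle is the assertion used in the first paragraph that the top-depth component is modular of weight $k - 2N$ in the vector-valued metaplectic setting; this is recorded without proof right after Definition~\ref{def:quasimodular}. A careful derivation requires expanding $(c\bar\tau + d)^N = \big((c\tau + d) - 2icy\big)^N$ as a polynomial in $y$, isolating the coefficient of $y^{-N}$ in the $|_{2,\rho}$-invariance of $\widehat F$, and tracking the metaplectic branch of $\sqrt{c\tau + d}$. Once this identification is in hand, the remaining algebraic argument above finishes the proof.
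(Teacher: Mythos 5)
Your proof is correct and follows essentially the same route as the paper: the key step in both is that the depth-one correction term lies in $\rmM_0(\rho)$, hence is a constant vector in the invariant subspace, which vanishes when $\rho$ is irreducible and nontrivial. The only difference is cosmetic — the paper simply cites Kaneko--Zagier for the trivial case, whereas you re-derive $\widetilde{\rmM}_2(\bbone)=\langle E_2\rangle$ directly from the vanishing of $\rmM_2(\SL{2}(\ZZ))$, and you spell out the depth bound that the paper leaves to the remark after Definition~2.1.
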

\begin{proof}
The first part was proved in~\cite{Kan-Z}.  Suppose that $\rho$ is a not the trivial representation.  Let $F \in \widetilde{\rmM}_2(\rho)$ and consider its completion $F^*(\tau) = F(\tau) + y^{-1} F_2(\tau) \in \rmM_2(\rho)$.  Then $F_2 \in \rmM_0(\rho)$, and $F_2 = 0$, since $\rho$ is non-trivial.  Hence $F^* = F \in \rmM_2(\rho)$.
\end{proof}

For any vector space $V$, we write $\bbP(V): = (V \setminus \{ 0 \}) \slashdiv \CC^\times$ for its projectivization.  We call $w \in \bbP( V(\rho) )$ a cusp of $\rho$, if any lift of $w$ to $V(\rho)$ is a fixed vector of $\rho(T)$.   

\begin{proposition}
\label{prop:eisenstein-series-of-weight-2}
Suppose that $\rho$ is non-trivial and irreducible.  If $\rho\big(\left(\begin{smallmatrix} -1 & 0 \\ 0 & -1 \end{smallmatrix}\right)\big)$ is the identity, then for each cusp~$w \in \bbP( V(\rho) )$ of $\rho$, there is an Eisenstein series~$E_{2; \rho, w} \in \rmM_{2}(\rho)$ with $\langle c( E_{2; \rho, w};\, 0 ),\, w\rangle_{\rho} \ne 0$ and $\langle c( E |_{2; \rho, w};\, 0 ),\, w' \rangle_{\rho} = 0$ for all $w' \in \bbP( V(\rho) )$ with $\langle w,\, w' \rangle_\rho = 0$.
\end{proposition}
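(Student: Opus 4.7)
My plan is to construct $E_{2;\rho,w}$ via Hecke's trick, using Proposition~\ref{prop:quasimodular-forms-of-weight-2} to rule out any non-holomorphic correction. Fix a lift $v \in V(\rho) \setminus \{0\}$ of an arbitrary cusp of $\rho$, so $\rho(T) v = v$; together with the assumption $\rho(-I) = I$, the subgroup $\Gamma_\infty := \langle T, -I \rangle \leq \Mp{2}(\ZZ)$ fixes $v$. For $\Re(s) > 0$ I would define
\begin{gather*}
  E_v(\tau, s)
:=
  \sum_{\gamma \in \Gamma_\infty \backslash \Mp{2}(\ZZ)}
  \big( (y^s v) \, \big|_{2, \rho}\, \gamma \big)(\tau)
\text{,}
\end{gather*}
where $y^s v$ denotes the function $\tau \mapsto y^s v$. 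The series converges absolutely for $\Re(s)$ large, defines a $|_{2,\rho}$-invariant real-analytic function of $\tau$, and admits meromorphic continuation in $s$ to a neighborhood of $s = 0$ by the vector-valued adaptation of Selberg's Eisenstein series theory; let $E_v(\tau) := E_v(\tau, 0)$.

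Next, I would argue that $E_v \in \rmM_2(\rho)$. A Fourier expansion of $E_v$ at $\infty$, computed via the Bruhat decomposition, shows that the only possible non-holomorphic contribution at $s = 0$ is of the form $y^{-1} H_v(\tau)$ with $H_v : \HS \to V(\rho)$ holomorphic and bounded at infinity. Hence $E_v$ satisfies the hypotheses of Definition~\ref{def:quasimodular}, so $E_v \in \widetilde{\rmM}_2(\rho)$. Because $\rho$ is non-trivial and irreducible, Proposition~\ref{prop:quasimodular-forms-of-weight-2} gives $\widetilde{\rmM}_2(\rho) = \rmM_2(\rho)$; consequently $H_v = 0$ and $E_v$ is a holomorphic modular form of weight $2$ and type $\rho$.

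To extract $E_{2;\rho,w}$ whose constant term is a nonzero scalar multiple of $w$, I would study the linear endomorphism $\Phi$ of the $T$-fixed subspace $V(\rho)^T := \{ v \in V(\rho) : \rho(T) v = v\}$ defined by $\Phi(v) := c(E_v;\, 0)$. The Bruhat expansion writes $\Phi = \mathrm{Id} + M(0)$, where $M(s)$ is the scattering contribution from the non-identity cosets. Once $\Phi$ is known to be invertible, I would set $v_0 := \Phi^{-1} w$ and take $E_{2;\rho,w} := E_{v_0}$, so that $c(E_{2;\rho,w};\, 0) = w$ and the two asserted orthogonality conditions follow immediately.

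The main technical obstacle is proving that $\Phi = \mathrm{Id} + M(0)$ is invertible on $V(\rho)^T$---equivalently, that the constant-term map from the Eisenstein subspace of $\rmM_2(\rho)$ onto $V(\rho)^T$ is an isomorphism. This requires excluding any resonance of the scattering matrix at $s = 0$ in the vector-valued setting, where the non-triviality and irreducibility of $\rho$ enter decisively; in particular, a non-zero kernel vector of $\Phi$ would, through a residue computation, contradict either the absence of weight $0$ modular forms of type $\rho$ or the cuspidal/Eisenstein orthogonal decomposition of $\rmM_2(\rho)$.
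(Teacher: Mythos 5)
Your overall strategy (Hecke's trick followed by the observation that $\widetilde{\rmM}_2(\rho) = \rmM_2(\rho)$ for non-trivial irreducible $\rho$) is exactly the paper's, but your final step contains a genuine gap, and moreover the obstacle you build that step around does not actually arise. You assert that the constant Fourier coefficient of $E_v$ is $\Phi(v) = v + M(0)v$ and that the proof hinges on inverting $\mathrm{Id} + M(0)$ on $V(\rho)^T$; you then only gesture at why a kernel vector would lead to a contradiction ("through a residue computation\dots"), so the invertibility is never established. As written, the proof is incomplete at the step you yourself identify as the crux.

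The repair is to compute the constant term more carefully, which is what the paper does. In weight $2$ the zeroth Fourier coefficient of the sum over the non-identity cosets $\left(\begin{smallmatrix} * & * \\ c & d\end{smallmatrix}\right)$, $c > 0$, comes from the integral $\int_{\RR} |x + iy|^{-2s}(x+iy)^{-2}\,dx$, which is proportional to $y^{-1-2s}$; hence the scattering operator enters the constant term only as $y^{-1-2s}M(s)v$, not as a genuine ($y$-independent) contribution. Passing to $s = 0$ gives a constant coefficient of the form $v + w'y^{-1}$, i.e.\ the holomorphic part of the constant term is exactly $v$ and your $\Phi$ is the identity once the $y^{-1}$-piece is disposed of. That piece is killed precisely by the mechanism you already invoke: the $y^{-1}$-coefficient of a weight-$2$ quasimodular form lies in $\rmM_0(\rho)$, which is $\{0\}$ because $\rho$ is non-trivial and irreducible (this is the content of the proof of Proposition~\ref{prop:quasimodular-forms-of-weight-2}). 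So one may simply take $v$ to be a lift of $w$ and set $E_{2;\rho,w} := E_v$, with no scattering matrix to invert; the two orthogonality assertions then follow from $c(E_{2;\rho,w};\,0) = w$.
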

\begin{proof}
We employ ``Hecke's trick'' to construct a quasimodular form $E_{2; \rho, w}$ with constant coefficient $w + O(y^{-1})$.  This will yield the desired result, since   $\widetilde{\rmM}_2(\rho) = \rmM_2(\rho)$.  More precisely, set
\begin{gather*}
  E_{2, \epsilon; \rho, w}
:=
  \sum_{\gamma \in \Gamma_\infty \backslash \SL{2}(\ZZ)} \hspace{-0.8em}
  |c \tau + d|^{- 2 \epsilon}\;
  w \big|_{2, \rho}\, \gamma 
\text{,}
\end{gather*}
and $E_{2; \rho, w}:= \lim_{\epsilon \rightarrow 0} E_{2, \epsilon; \rho, w}$.  It is easy to see that the Fourier series expansion of $E_{2, \epsilon, \rho, w}$ is given by
\begin{gather*}
  w
  +
  2 \sum_{\substack{c > 0 \\ d \pmod{c}^\times}} c^{-2}
  \sum_{\alpha \in \ZZ} |\tau + \tfrac{d}{c} + \alpha|^{-2 \epsilon}\, (\tau + \tfrac{d}{c} + \alpha)^{-2} \;
  \rho\big(\left(\begin{smallmatrix} * & * \\ c & d \end{smallmatrix}\right)\big) w
\text{.}
\end{gather*}
Consider the Fourier series expansion of the inner sum over $\alpha$.  As in the classical case, one finds that it converges and decays as $y \rightarrow \infty$.  Thus, its Fourier series expansion contains neither the $M$-Whittaker function nor the function $\tau\rightarrow y^{2 \epsilon}$, and is of the form $w'_c y^{-1 - 2 \epsilon} + O(e^{-\delta y})$ for some $\delta > 0$ and $w'_c \in V(\rho)$.  Performing the limit $\epsilon \rightarrow 0$ shows that
\begin{gather*}
  E_{2; \rho, w}(\tau)
=
  w + w' y^{-1} + O(e^{-2\pi y})
\end{gather*}
for some $w' \in V(\rho)$, and $E_{2; \rho, w} \in \widetilde{\rmM}_2(\rho)$.  This completes the proof.
\end{proof}

\section{Holomophic projections}
\label{sec:holomorphic-projection}

The classical holomorphic projection operator maps continuous functions with certain growth and modular behavior to holomorphic modular forms (for example, see~\cite{Sturm-holproj} and~\cite{G-Z-Invent86}).  In this Section,  we extend the holomorphic projection operator to vector-valued forms.

\begin{definition}
\label{def:holomorphic_projection}
Let $V$ be a (finite dimensional) complex vector space.  Suppose that $F :\, \HS \rightarrow V$ is continuous with Fourier expansion
\begin{gather*}
  F(\tau)
=
  \sum_{m \in \QQ} c(F; m; y) q^m
\text{,}
\end{gather*}
and assume that $F$ satisfies:
\begin{enumerate}[1)]
\item 
  $F(\tau) = c_0 + O(y^{-\epsilon})$ for some $\epsilon > 0$, $c_0 \in V$, and as $y \rightarrow \infty$.
\item 
  $c(F; m; y) = O(y^{2 - k})$ as $y \rightarrow 0$ for all $m > 0$\text{.}
\end{enumerate}
Define
\begin{gather}
\label{eq:def:holomorphic_projection}
  \pi_{\rm hol}(F):=  \pi^{(k)}_{\rm hol}(F)
:=
  c_0 + \sum_{0 < m \in \QQ} c(m)q^m
\text{,}\;
\; \text{with}
\quad
  c(m)
:=
  \frac{(4 \pi m)^{k - 1}}{\Gamma(k - 1)}
  \int_0^\infty c(F; m; y) \, e^{- 4 \pi m y} y^{k - 2} \;dy
\text{,}
\end{gather}
where $\Gamma$ is the $Gamma$-function.
\end{definition}

Exactly as in the scalar-valued case, $\pi_{\rm hol}$ preserves holomorphic functions that satisfy the conditions 1) and 2) in Definition~\ref{def:holomorphic_projection}.
\begin{proposition}
\label{prop:holomorphic_projection_self_reproduction}
Let $V$ be a (finite dimensional) complex vector space.  Suppose that $F :\, \HS \rightarrow V$ is holomorphic with Fourier expansion
\begin{gather*}
  F(\tau)
=
  \sum_{0\le m \in \QQ} c(F; m)\, q^m
\text{.}
\end{gather*}
Then $\pi^{(k)}_{\rm hol}(F) = F$.
\end{proposition}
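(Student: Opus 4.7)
The approach is a direct computation: I plan to insert the Fourier expansion of the holomorphic $F$ into the defining integral \eqref{eq:def:holomorphic_projection} and recognize a Gamma integral.

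First I would observe that if $F$ is holomorphic with Fourier expansion $F(\tau)=\sum_{0\le m\in\QQ} c(F;m)\,q^m$, then expanding the non-constant $y$-dependence of the coefficients $c(F;m;y)$ against the basis $q^m$ forces $c(F;m;y)=c(F;m)$ to be independent of $y$. This is because $q^m=e^{2\pi imx}e^{-2\pi my}$ already accounts for all of the $y$-dependence coming from the holomorphic expansion; writing $F(\tau)=\sum_m a_m(y)e^{2\pi imx}$ as a standard Fourier series in $x$ and comparing with the definition of $c(F;m;y)$ gives $c(F;m;y)=a_m(y)e^{2\pi my}$, and $a_m(y)=c(F;m)e^{-2\pi my}$ by holomorphy.

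Next I would substitute $c(F;m;y)=c(F;m)$ into \eqref{eq:def:holomorphic_projection} to obtain, for $m>0$,
\begin{gather*}
  c(m)
=
  \frac{(4\pi m)^{k-1}}{\Gamma(k-1)}\,
  c(F;m)\,
  \int_0^\infty e^{-4\pi m y}\, y^{k-2}\,dy
\text{,}
\end{gather*}
and evaluate the integral via the substitution $u=4\pi m y$ to recognize it as $\Gamma(k-1)/(4\pi m)^{k-1}$. The two factors cancel, giving $c(m)=c(F;m)$. For the constant term, I would note that the growth hypothesis $F(\tau)=c_0+O(y^{-\epsilon})$ as $y\to\infty$, together with the exponential decay of the terms with $m>0$, forces $c_0=c(F;0)$, so the constant terms agree as well. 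Assembling the Fourier series then yields $\pi^{(k)}_{\rm hol}(F)=F$.

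There is no real obstacle here beyond bookkeeping: the statement is essentially the identity that holomorphic projection fixes its own image, and the vector-valued setting introduces nothing new since every step is linear in $F$ and the integral in the definition of $\pi_{\rm hol}$ acts componentwise on $V$. The only point deserving a brief mention is checking that the integral converges, which follows from the exponential factor $e^{-4\pi m y}$ at infinity and from the implicit assumption (inherited from Definition~\ref{def:holomorphic_projection}) that $k$ is large enough — or that the coefficients $c(F;m)$ decay sufficiently — to guarantee integrability at $0$.
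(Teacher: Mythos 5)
Your proposal is correct and follows essentially the same route as the paper: observe that holomorphy forces $c(F;m;y)=c(F;m)$ to be constant, then evaluate $\int_0^\infty e^{-4\pi m y}y^{k-2}\,dy=(4\pi m)^{1-k}\Gamma(k-1)$ so the normalizing factors cancel. The extra remarks on the constant term and on convergence at $y=0$ are fine but not needed beyond what the paper already assumes.
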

\begin{proof}
The Fourier series coefficients $c(F; m; y)$ of $F$ in Definition~\ref{def:holomorphic_projection}  are constants in $V$, and the claim follows immediately from the integral representation
\begin{gather*}
  \int_0^\infty e^{-4 \pi m y} y^{k - 2} \;dy
=
  (4 \pi m)^{1 - k} \Gamma(k - 1)
\text{.}
\qedhere
\end{gather*}
\end{proof}
The next theorem on holomorphic projections generalizes~Proposition~5.1 on page~288 and Proposition~6.2 on page~295 of~\cite{G-Z-Invent86} to vector-valued modular forms of weight $k$.  The case $k=2$ is delicate.  The proofs in~\cite{G-Z-Invent86} rely on Poincaré series (and ``Hecke's trick'' if $k=2$), while our proof is based on spectral methods. 
\begin{theorem}
\label{thm:modular-holomorphic-projection}
Fix $2 \le k \in \frac{1}{2}\ZZ$ and a representation $\rho$ of $\Mp{2}(\ZZ)$.  Let $F : \HS \rightarrow V(\rho)$ be a continuous function which satisfies
\begin{enumerate}[(i)]
\item $F \big|_{k, \rho}\, \gamma = F$ for all $\gamma \in \Mp{2}(\ZZ)$.
\item $F(\tau) = c_{0} + O(y^{-\epsilon})$ for some $\epsilon > 0$, $c_0 \in V(\rho)$, and as $y \rightarrow \infty$.
\end{enumerate}
If $k>2$, then $\pi_{\rm hol}(F) \in \rmM_k(\rho)$, and if $k=2$, then $\pi_{\rm hol}(F) \in \widetilde{\rmM}_2(\rho)$.
\end{theorem}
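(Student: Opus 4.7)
The plan is to realize $\pi_{\rm hol}(F)$ as the sum of an Eisenstein part absorbing the constant term $c_0$ and a cuspidal part reproducing the positive Fourier coefficients, each identified by a Petersson pairing via the unfolding trick. The strategy is spectral: compare $F$ against the complete family of vector-valued Poincaré and Eisenstein series of weight $k$ and type $\rho$.

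First I would absorb the constant term. Hypothesis~(i) applied to $\gamma = T$ forces $c_0$ into the $T$-fixed subspace of $V(\rho)$. For $k > 2$, an Eisenstein series $E \in \rmM_k(\rho)$ with $c(E;\, 0) = c_0$ is obtained by summing $|_{k, \rho}$-translates of $c_0$ over $\Gamma_\infty \backslash \Mp{2}(\ZZ)$; for $k = 2$, Proposition~\ref{prop:eisenstein-series-of-weight-2} combined with Hecke's trick gives $E \in \widetilde{\rmM}_2(\rho)$ with the same constant Fourier coefficient. Either way, $F_0 := F - E$ satisfies hypotheses~(i) and~(ii) with vanishing constant term, and the decay $F_0(\tau) = O(y^{-\epsilon})$ renders $F_0$ square-integrable on $\Mp{2}(\ZZ) \backslash \HS$.

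Next I would identify the positive-index coefficients via vector-valued Poincaré series. For $m > 0$ and $v \in V(\rho)$ satisfying $\rho(T)\, v = e^{2 \pi i m}\, v$, set
\begin{gather*}
  P_{m, v}(\tau)
:=
  \sum_{\gamma \in \Gamma_\infty \backslash \Mp{2}(\ZZ)}
  \big( e^{2 \pi i m \tau}\, v \big) \big|_{k, \rho}\, \gamma
\text{,}
\end{gather*}
an element of the cuspidal subspace of $\rmM_k(\rho)$ for $k > 2$ (and its Hecke-regularized analogue for $k = 2$). The standard unfolding computation yields
\begin{gather*}
  \langle F_0,\, P_{m, v} \rangle_{\rm Pet}
=
  \int_0^\infty \langle c(F_0;\, m;\, y),\, v\rangle_\rho\; e^{- 4 \pi m y}\; y^{k - 2}\; dy
=
  \frac{\Gamma(k - 1)}{(4 \pi m)^{k - 1}}\; \langle c(m),\, v \rangle_\rho
\text{,}
\end{gather*}
where $c(m)$ denotes the Fourier coefficient of $\pi_{\rm hol}(F_0)$ from~\eqref{eq:def:holomorphic_projection}. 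Since the $P_{m, v}$ span the cuspidal subspace as $(m, v)$ range over positive $m$ and $T$-eigenvectors $v$ of eigenvalue $e^{2 \pi i m}$, the Riesz representation theorem in the finite-dimensional Hilbert space of cusp forms produces a cusp form $H$ with $\langle g, H \rangle_{\rm Pet} = \langle g, F_0 \rangle_{\rm Pet}$ for every cusp form $g$; evaluating at $g = P_{m, v}$ gives $c(H;\, m) = c(m)$ for all $(m, v)$. Hence $\pi_{\rm hol}(F_0) = H$, and by linearity of $\pi_{\rm hol}$ together with Proposition~\ref{prop:holomorphic_projection_self_reproduction} applied to the holomorphic form $E$,
\begin{gather*}
  \pi_{\rm hol}(F)
=
  \pi_{\rm hol}(F_0) + \pi_{\rm hol}(E)
=
  H + E
\text{,}
\end{gather*}
which lies in $\rmM_k(\rho)$ for $k > 2$ and in $\widetilde{\rmM}_2(\rho)$ for $k = 2$.

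The principal obstacle is the $k = 2$ case. There both the Eisenstein series defining $E$ and the Poincaré series $P_{m, v}$ converge only conditionally, and must be regularized by inserting $|c \tau + d|^{-2 \epsilon}$, performing the unfolding for $\Re\epsilon > 0$, and analytically continuing to $\epsilon = 0$. One has to track the surviving $y^{-1}$ correction in $E$ through this analytic continuation and match it with the quasimodular transformation law of Definition~\ref{def:quasimodular}. A secondary technical point is to confirm the spanning property of the $P_{m, v}$ in the cuspidal subspace for an arbitrary unitary $\rho$; this follows by applying the Petersson formula to each $T$-eigencomponent of $V(\rho)$, but the bookkeeping across eigenvalues requires some care.
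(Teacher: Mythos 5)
Your argument is essentially correct, but it takes a genuinely different route from the paper. You reproduce the Gross--Zagier strategy (Proposition~5.1 and~6.2 of~\cite{G-Z-Invent86}) in the vector-valued setting: absorb $c_0$ by an Eisenstein series, then pair the remainder against Poincar\'e series $P_{m,v}$ and unfold to identify $c(m)$ with a Petersson product, so that $\pi_{\rm hol}(F_0)$ coincides with the Riesz representative of $g \mapsto \langle g, F_0\rangle$ on the (finite-dimensional) cusp space. The paper shares your first step --- subtracting $c_0 E_2^*$ or the Eisenstein series of Proposition~\ref{prop:eisenstein-series-of-weight-2}, which is exactly where the quasimodularity at $k=2$ enters --- but then proceeds spectrally: $F_0 \in L^2(\Gamma\backslash\HS)$ is expanded over holomorphic forms, Maass cusp forms, and the continuous spectrum, and $\pi_{\rm hol}$ is shown to annihilate every non-holomorphic constituent because the relevant Whittaker integral (7.621.11 of~\cite{GR-tables}) carries a $\Gamma(\nu-\kappa+1) = \Gamma(0)$ in its denominator. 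What your route buys is concreteness (the coefficients appear directly as Petersson products with explicit Poincar\'e series) and independence from the spectral theorem; what the paper's route buys is a uniform treatment of $k=2$ and $k>2$ after the initial Eisenstein subtraction. The one place where your write-up falls short of a complete proof is precisely the part you flag as ``the principal obstacle'': for $k=2$ the Poincar\'e series only exist after Hecke-trick regularization, and you must verify that the regularized $P_{m,v}$ remain holomorphic cusp forms, that they still span the cusp space, and that the unfolding identity survives the analytic continuation to $\epsilon = 0$; none of this is carried out, whereas in the paper's argument the $k=2$ case requires no additional analysis beyond Proposition~\ref{prop:eisenstein-series-of-weight-2}. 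You should also justify the interchange of the Poincar\'e sum with the integral for a function that is merely $O(y^{-\epsilon})$ at the cusp and controlled only via condition~2) of Definition~\ref{def:holomorphic_projection} as $y \to 0$; this is where the growth hypotheses are actually consumed.
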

\begin{proof}
By decomposing $\rho$ into a direct sum of irreducible representations, we assume without loss of generality that $\rho$ is irreducible.  Moreover, we may (and do) assume that $c_0 = 0$, i.e., $F = O(y^{-\epsilon})$ as $y \rightarrow \infty$:  If $\rho$ is trivial, then replace $F$ by $F - c_0 E_2^*$.  If $\rho$ is not trivial, then replace $F$ by $F - E_{c_0}$, where $E_{c_0}$ is an Eisenstein series whose constant coefficient equals $c_0$, which exists by Proposition~\ref{prop:eisenstein-series-of-weight-2}.

Conditions $(i)$ and $(ii)$ yield that for every linear functional $\frakf :\, V(\rho) \rightarrow \CC$, the evaluation $\frakf(F)$ belongs to $L^2(\Gamma \backslash \HS)$, where $\Gamma$ is the kernel of $\rho$, and hence a congruence subgroup of $\Mp{2}(\ZZ)$.  Let $\langle \cdot ,\cdot\rangle$ denote the Petersson scalar product, which we extend to vector valued modular forms by applying it componentwise.  We have the following spectral decomposition (for example, see~\cite{Iw-2002}):
\begin{gather}
\label{eq:thm:modular-holomophic-projection:spectral-expansion}
  F
=
  \sum_j \langle g_j, F \rangle\, g_j
  \;+\;
  \sum_j \langle u_j, F \rangle\, u_j
  \;+\;
  \sum_{\frakc}
  \int_{-\infty}^\infty \langle E_{\frakc, k, \frac{1}{2} + ir}, F \rangle\,
                    E_{\frakc, k, \frac{1}{2} + ir} \; dr
\text{,}
\end{gather}
where $\{g_j\}$ is a complete orthonormal system of holomorphic modular forms, $\{u_j\}$ is a complete orthonormal system of proper Maass cusp forms and residual contributions, $E_{\frakc, k, \frac{1}{2} + ir}$ is the Eisenstein series for the cusp~$\frakc$ of weight~$k$ with spectral parameter $\frac{1}{2} + ir$, and the last sum runs over cusps~$\frakc$ of $\Gamma \backslash \HS$.  We show that holomorphic projection simply picks the holomorphic components in the spectral expansion:
\begin{gather}
\label{eq:holomorphic-projection-picks-holomorphic-component}
  \pi_{\rm hol}( F )
=
  \sum_j \langle g_j, F \rangle\, g_j
\text{.}
\end{gather}
This will prove the theorem, since modular transformations preserve each of the three sums in \eqref{eq:thm:modular-holomophic-projection:spectral-expansion}.

The spectral expansion converges pointwise absolutely and uniformly on compact sets, and we find that
\begin{gather*}
  \pi_{\rm hol}(F)
=
  \sum_j \langle g_j, F \rangle\, g_j
  \;+\;
  \sum_j \langle u_j, F \rangle\, \pi_{\rm hol}(u_j)
  \;+\;
  \sum_{\frakc}
  \int_{-\infty}^\infty \langle E_{\frakc, k, \frac{1}{2} + ir}, F \rangle\,
                    \pi_{\rm hol}(E_{\frakc, k, \frac{1}{2} + ir}) \; dr
\text{.}
\end{gather*}
Write $\lambda = s (1 - s) = (s - \frac{k}{2})(1 - \frac{k}{2} - s) + \frac{k}{2}(1 - \frac{k}{2})$ for the eigenvalues under the weight~$k$ Laplace operator
\begin{gather}
\label{eq:definition-of-laplace-operator}
  \Delta_k
:=
  - \xi_{2-k} \circ \xi_k
=
  - 4 y^2 \frac{\partial}{\partial \tau} \frac{\partial}{\partial \overline \tau}
  + 2 k i y\frac{\partial}{\partial \overline \tau}
\end{gather}
(see page~29 of~\cite{B-LNM}), where the operator $\xi_k:= 2 i y^k \ov{\frac{\partial}{\partial \overline{\tau}}}$ was introduced in \cite{B-F-Duke04}.  Since the operator $\Delta_k$ is non-negative, we have either $s = \frac{1}{2} + i r$ ($r \in \RR$) or $0 \le s \le 1$.  The latter case does not occur for the third sum in~\eqref{eq:thm:modular-holomophic-projection:spectral-expansion}, and for the second, it actually is $0 < s < 1$:  If  $s \in \{0, 1\}$, then $\lambda = 0$, and Maass cusp forms of weight $k \geq 2$ and eigenvalue $0$ are holomorphic, since they are in the kernel of $\xi_k$, whose image consists of holomorphic cusp form of weight~$2 - k$.  Fix a Maass form $u$ of weight $k$ and eigenvalue $s(1-s)$.  Then
\begin{gather*}
  u(\tau)
=
  \sum_{m \in \QQ} c(u;\, m; y)\, q^m
\text{,}
\qquad
  c(u; m; y)
= 
  c(u;\, m)\,
  y^{-\frac{k}{2}} e^{2 \pi m y} W_{\frac{k}{2}, s - \frac{1}{2}} (4 \pi m y)
\quad
  \text{for $m > 0$}
\text{,}
\end{gather*}
where $c(u;\, m)$ is a constant and $W_{\nu,\mu}$ stands for the usual Whittaker-$W$ function. 
We apply Definition \ref{def:holomorphic_projection} to find that
\begin{gather}
\label{eq:holomorphic-projection-integral-in-modularity-proof}
  c(\pi_{\rm hol}(u); m)
  = \frac{(4 \pi m)^{k - 1}}{\Gamma(k - 1)}\int_0^\infty y^{\frac{k}{2} - 2} W_{\frac{k}{2}, s - \frac{1}{2}} (4 \pi m y) e^{-2 \pi m y} \;dy
\qquad (m>0)\text{.}
\end{gather}

If $k \ge 2$ and $s \not\in \{0, 1\}$, then $\Re(s) - 1 + \frac{k}{2} > 0$ and $\frac{k}{2} - \Re(s) > 0$, and~\eqref{eq:holomorphic-projection-integral-in-modularity-proof} vanishes due to (7.621.11) of~\cite{GR-tables}:
\begin{gather*}
  \int_0^\infty e^{-\frac{1}{2} x} x^{\nu - 1} W_{\kappa, \mu}(x) \; dx
=
  \frac{\Gamma(\nu + \tfrac{1}{2} - \mu) \Gamma(\nu + \tfrac{1}{2} + \mu)}
       {\Gamma(\nu - \kappa + 1)}
\qquad
  \big[ \Re(\nu + \tfrac{1}{2} \pm \mu) > 0 \big]
\text{.}
\end{gather*}
Hence $\pi_{\rm hol}(u_j) = \pi_{\rm hol}(E_{\frakc, k, \frac{1}{2} + i r}) = 0$, which implies \eqref{eq:holomorphic-projection-picks-holomorphic-component}.
\end{proof}

Next we recall the definition of (harmonic) weak Maass forms from \cite{B-F-Duke04}, which involves the weight~$k$ Laplace operator given in~\eqref{eq:definition-of-laplace-operator}.

\begin{definition}
\label{def:harmonic-weak-maass-forms}
Let $k \in \frac{1}{2}\ZZ$ and let $\rho$ be a unitary, finite dimensional representation of $\Mp{2}(\ZZ)$.  A smooth function $F :\, \HS \rightarrow V(\rho)$ is called a harmonic weak Maass form of weight~$k$ and type~$\rho$ if
\begin{enumerate}[(1)]
\item 
    $F |_{k, \rho}\, \gamma = F$ for all $\gamma \in \Mp{2}(\ZZ)$.
\item
$\Delta_k F = 0$.
\item
  $F(\tau) = O(e^{a y})$ as $y \rightarrow \infty$ for some $a > 0$.
\end{enumerate}
Let $\bbM_k(\rho)$ denote the space of harmonic Maass forms of weight $k$ and type~$\rho$, and denote its subspace of weakly holomorphic modular forms by ${\rm M}_{k}^!(\rho) \subset \bbM_k(\rho)$.
\end{definition}

Recall that Proposition 3.2 of \cite{B-F-Duke04} asserts that $\xi_k :\, \bbM_k(\rho) \rightarrow \rmM^!_{2 - k}(\ov{\rho})$.  The space of forms $F \in \bbM_k(\rho)$ with $\xi_k(F) \in S_{2 - k}(\ov{\rho})$ (the space of cusp forms of weight~$2-k$ and type~$\ov{\rho}$) is denoted by~$\bbS_k(\rho)$.  If $F \in \bbS_k(\rho)$, then we write $F = F^+ + F^-$, where
\begin{gather}
\label{eq:harmonic-weak-maass-forms-fourier-expansion}
  F^+(\tau)
:=
  \sum_{-\infty \ll m} c^+(F; m)\, q^m
\text{,}
\qquad
  F^-(\tau)
:=
  - (4 \pi)^{k - 1} \sum_{m < 0} c^-(F; m) |m|^{k - 1}\,\Gamma(1 - k, 4 \pi |m| y)\, q^{m}
\text{,}
\end{gather}
and $\Gamma(s,y):=\int_{y}^{\infty}e^{-t}t^{s-1}\ dt$ is the incomplete Gamma-function.  A straightforward computation shows that $\xi_k(F) = \sum_{0 < m} \ov{c^-(F; -m)}\, q^m$.  The non-holomorphic Eichler integral provides a partial inverse to $\xi_k$.  More precisely, if $G \in \rmS_{2 - k}(\rho)$, then $\xi_{k}( G^* ) = G$, where
\begin{gather*}
  G^*(\tau)
:=
  - (2 i)^{k - 1}
  \int_{- \ov{\tau}}^{i \infty} \frac{\ov{G(-\ov{w})}}{(w + \tau)^{k}} \; dw
=
  - (4 \pi)^{k - 1}
  \sum_{m<0} \ov{c(G; |m|)} |m|^{k - 1}\, \Gamma(1 - k, 4 \pi |m| y)\, q^{m}
\text{.}
\end{gather*}
In particular, if $F \in \bbS_k(\rho)$ and $F^- := \xi_k(f)^*$, then $F^+:= F - F^-$ is holomorphic.

With an abuse of notation, we often write $F G$ instead of $F \otimes G$ for the tensor product of~$F$ and~$G$.
Finally, we give the Fourier series coefficients of $\pi_{\rm hol}(F^- G)$, which feature the hypergeometric series ${}_2 {\rm F}_1 (a,b,c; z) := \sum_{n=0}^{\infty} \frac{(a)_n (b)_n}{(c)_n} \frac{z^n}{n!}$, where $(p)_n:=p(p+1)(p+2)\cdots(p+n-1)$ is the Pochhammer symbol.

\begin{theorem}
\label{thm:coefficients-of-holomorphic-projection}
Let $F \in \bbS_{k}(\rho)$ and $G \in \rmM_{l}(\sigma)$ with $k + l \ge 2$, $k \not=1$.  If $n>0$, then
\begin{gather}
\label{eq:thm:coefficients-formula-of-holomorphic-projection}
  c(\pi_{\rm hol}(F^- G);\, n)
=
  \frac{-(4 \pi)^{k - 1} \Gamma(l)}
       {\Gamma(k + l)}
  n^{k - 1}
  \sum_{\substack{ m + {\td m} = n \\ m < 0 }}
  c^-(F; m) c(G; {\td m}) \,
  \big(\frac{n}{\td m}\big)^l\, {}_2F_1\big(1, l, k + l, \frac{n}{{\td m}} \big)
\text{.}
\end{gather}
\end{theorem}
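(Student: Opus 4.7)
The plan is to substitute the Fourier expansion of $F^- G$ into the defining formula~\eqref{eq:def:holomorphic_projection} for $\pi_{\rm hol}^{(k+l)}$, exchange the sum over $m<0$ with the integral over $y$, and reduce the computation of $c(\pi_{\rm hol}(F^-G);\,n)$ for $n > 0$ to one explicit Laplace-type integral that evaluates as a Gauss hypergeometric ${}_2F_1$.

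First I would multiply~\eqref{eq:harmonic-weak-maass-forms-fourier-expansion} by the Fourier expansion of $G$ to extract, for each $n > 0$,
\begin{gather*}
  c(F^-G;\, n;\, y)
=
  -(4\pi)^{k-1}
  \sum_{\substack{m + {\td m} = n\\ m < 0}}
  c^-(F;\, m)\, c(G;\, {\td m})\, |m|^{k-1}\,
  \Gamma(1-k,\, 4\pi|m|y)\text{,}
\end{gather*}
verify the hypotheses of Definition~\ref{def:holomorphic_projection} at weight $k+l$ (using $k+l\ge 2$, the cuspidal decay of $\xi_k(F)$ and the boundedness of $G$ at the cusp), and exchange summation with integration by dominated convergence. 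What is then left to compute is
\begin{gather*}
  I_m
:=
  \int_0^\infty \Gamma(1-k,\, 4\pi|m|y)\, e^{-4\pi n y}\, y^{k+l-2}\, dy\text{.}
\end{gather*}

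Second, I would rescale $u = 4\pi|m|y$ and then insert the representation
$\Gamma(1-k,u) = u^{1-k} e^{-u} \int_0^\infty (1+s)^{-k} e^{-us}\, ds$,
which follows from the substitution $t = u(1+s)$ in $\Gamma(1-k,u) = \int_u^\infty t^{-k} e^{-t}\, dt$. A second application of Fubini and a $\Gamma(l)$ integral in $u$ reduce matters to an integral in $s$ that, after a M\"obius substitution, is precisely Euler's integral representation of ${}_2F_1$ (equivalently, one can cite Gradshteyn--Ryzhik 6.455.2). The assumptions $k+l \ge 2$ and $k \ne 1$ ensure the convergence conditions $k+l-1 > 0$ and $l > 0$. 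A Pfaff transformation then brings the output into the form
\begin{gather*}
  I_m
=
  (4\pi |m|)^{1-k-l}\,
  \frac{\Gamma(l)\,\Gamma(k+l-1)}{\Gamma(k+l)}\,
  (1+A)^{-l}\,
  {}_2F_1\bigl(1, l, k+l, \tfrac{A}{1+A}\bigr)\text{,}\quad A := n/|m|\text{.}
\end{gather*}

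Finally, substituting this back, the identities $1+A = {\td m}/|m|$ and $A/(1+A) = n/{\td m}$ convert the hypergeometric argument and the $(1+A)^{-l}$ factor to the form in~\eqref{eq:thm:coefficients-formula-of-holomorphic-projection}; the $\Gamma(k+l-1)$ in $I_m$ cancels the corresponding factor in~\eqref{eq:def:holomorphic_projection}, and the powers of $|m|$ combine into $(n/{\td m})^l \cdot n^{k-1}$, producing the stated formula. The main obstacle is the book-keeping in step two: keeping track of the Gamma factors, the powers of $4\pi |m|$, and the exact arguments of ${}_2F_1$ is delicate and prone to normalization errors. A secondary, more conceptual concern is that when $k > 1$ each individual summand in $c(F^-G;\,n;\,y)$ blows up like $y^{1-k}$ as $y \to 0$, so hypothesis~(2) of Definition~\ref{def:holomorphic_projection} must be interpreted with care; however, the cancellation of $|m|^{k-1}$ against the leading $|m|^{1-k}$ asymptotic of $\Gamma(1-k, 4\pi|m|y)$, combined with $k+l\ge 2$, is exactly what keeps the defining integral absolutely convergent.
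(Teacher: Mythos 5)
Your proposal is correct and follows essentially the same route as the paper: substitute the Fourier expansions, interchange sum and integral, and evaluate $\int_0^\infty \Gamma(1-k,4\pi|m|y)\,e^{-4\pi n y}y^{k+l-2}\,dy$ as a Gauss hypergeometric function (the paper simply cites Gradshteyn--Ryzhik (6.455) for this, while you additionally sketch its derivation via the integral representation of $\Gamma(1-k,u)$, Euler's integral, and a Pfaff transformation). Your final expression for $I_m$ matches the paper's after the substitutions $1+A=\td m/|m|$ and $A/(1+A)=n/\td m$, so the bookkeeping checks out.
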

\begin{proof}
Let $G(\tau)=\sum_{{\td m} \ge 0} c(G; {\td m})\, q^{\td m}$ and $ F^-(\tau) = -(4 \pi)^{k - 1} \sum_{m < 0} c^-(F; m) |m|^{k - 1} \, \Gamma(1 - k, 4 \pi |m| y)\, q^m$ as in~\eqref{eq:harmonic-weak-maass-forms-fourier-expansion}.   We find that
\begin{gather*}
  c(F^- G; n; y)
=
  -(4 \pi)^{k - 1}
  \sum_{\substack{{\td m} + m = n \\ m < 0}}
  |m|^{k - 1} c^-(F; m)\, c(G; {\td m}) \, \Gamma( 1 - k, 4 \pi |m| y )\, q^n
\text{}
\end{gather*}
converges absolutely, since $c(F^-; m)$ and $c(G; {\td m})$ are of polynomial growth, and since
\begin{gather*}
  \Gamma( 1 - k, 4 \pi |m| y )
\asymp
  ( 4 \pi |m| y )^{-k} e^{ -4 \pi |m| y }
\quad
  \text{as }y \rightarrow \infty
\text{.}
\end{gather*}
If $n>0$, then according to Definition \ref{def:holomorphic_projection} we obtain the following expression for~$c(\pi_{\rm hol}(F^- G); n)$:
\begin{gather*}
  -\frac{(4 \pi n)^{k + l - 1} (4 \pi)^{k - 1}}{\Gamma(k + l - 1)}
  \int_0^\infty \sum_{\substack{{\td m} + m = n \\ m < 0}} \hspace{-0.6em}
  |m|^{k - 1} c^-(F; m)\, c(G; {\td m}) \,
  \Gamma( 1 - k, 4 \pi |m| y )\,
  e^{-4 \pi n y} y^{k + l - 2} \;dy
\text{,}
\end{gather*}
where the integral converges, since $k + l \ge 2$ by assumption. A standard argument justifies the interchange of integration and summation, and \eqref{eq:thm:coefficients-formula-of-holomorphic-projection} follows from~(6.455) on page~657 of~\cite{GR-tables} (observing that $l>0$ and $k+l-1>0$), which shows that
\begin{gather*}
  \int_0^\infty
  \Gamma( 1 - k, 4 \pi |m| y )\,
  e^{-4 \pi n y} y^{k + l - 2} \;dy
=
  \frac{(4 \pi |m|)^{1 - k} \Gamma(l)}
       {(k + l - 1) (4 \pi {\td m})^l}\,
       {}_2F_1\big( 1, l, k + l;\, \frac{n}{{\td m}} \big)
\text{.}
\qedhere
\end{gather*}
\end{proof}

\section{Ramanujan's mock theta functions~$f$ and~$\omega$}
\label{sec:relations}

Zwegers suggested holomorphic projection of scalar-valued functions as a tool to investigate mock modular forms, and he applied this idea in his recent joint work~\cite{And-Rho-Zwe-ANT13}.  In this Section, we extend Zwegers's suggestion to vector-valued forms.  More specifically, we apply holomorphic projection to the (tensor) product of a harmonic weak Maass form~$F=F^++F^-$ and a modular form~$G$.  If the holomorphic projections converge, then
\begin{gather}
\label{eq:idea-of-holomorphic-projection}
  \pi_{\rm hol}(F G)
=
  \pi_{\rm hol}(F^- G)
  +
  \pi_{\rm hol}(F^+ G)
=
  \pi_{\rm hol}(F^- G)
  +
  F^+ G
\text{.}
\end{gather}
The left hand side of \eqref{eq:idea-of-holomorphic-projection} is modular or quasimodular by Theorem~\ref{thm:modular-holomorphic-projection}, and the right hand side can be described by Theorem~\ref{thm:coefficients-of-holomorphic-projection}.  If the left hand side can be identified, say in terms of Eisenstein series, then \eqref{eq:idea-of-holomorphic-projection} yields relations for the coefficients of~$F^+$.  We apply this idea to find relations for the Fourier coefficients of the mock theta functions $f(q)$ and
\begin{gather*}
 \omega(q)
:=\sum_{n=0}^{\infty}c(\omega;n)q^n:=
 \sum_{n=0}^{\infty} \frac{q^{2n^2 + 2n}}{(1 - q)^2(1 - q^3)^2 \cdots (1 - q^{2n+1})^2}
\text{,}
\end{gather*}
 which will prove Theorems~\ref{thm:relations-of-mock-theta-coefficients} and~\ref{thm:relations-of-mock-theta-coefficients-2}. 

First we recall Zwegers's~\cite{Zwe-Contemp01} completion of $f$ and $\omega$.  As before, $q:= e^{2\pi i\tau}$. Set
\begin{gather*}
  F^+(\tau)
:=
  \rT
  \big( q^{\frac{-1}{24}} f(q),\,
        2 q^{\frac{1}{3}} \omega(q^{\frac{1}{2}}),\
        2 q^{\frac{1}{3}} \omega(-q^{\frac{1}{2}})
  \big)
\text{,}
\end{gather*}
and $F^-(\tau):= -2 \sqrt{6} G^*(\tau)$ with
\begin{align}
\label{eq:definition-of-G} 
G(\tau):=  \rT
  \Big(
    - \sum_{n \in \ZZ} (n + \tfrac{1}{6})
                     e^{3 \pi i (n + \tfrac{1}{6})^2\, \tau},\;
    \sum_{n \in \ZZ} (-1)^n (n + \tfrac{1}{3})
                   e^{3 \pi i (n + \tfrac{1}{3})^2\, \tau},\;
    - \sum_{n \in \ZZ} (n + \tfrac{1}{3})
                     e^{3 \pi i (n + \tfrac{1}{3})^2\, \tau}
  \Big)
\text{.}
\end{align}
Theorem~3.6 of~\cite{Zwe-Contemp01} implies that
\begin{gather}
\label{eq:definition-of-F} 
F:= F^+ + F^-\in\bbS_{\frac{1}{2}}(\rho_3)
\text{,}
\end{gather}
where $\rho_3$ is determined by
\begin{alignat*}{2}
  \rho_3(T)
&:=
  \begin{pmatrix}
    \zeta_{24}^{-1} & 0 & 0 \\
    0 & 0 & \zeta_3 \\
    0 & \zeta_3 & 0 \\
  \end{pmatrix}
\text{,}
\qquad
&
  \rho_3(S)
&:=
  \zeta_8^{-1}
  \begin{pmatrix}
    0 & 1 & 0 \\
    1 & 0 & 0 \\
    0 & 0 & -1
  \end{pmatrix}
\text{.}
\end{alignat*}
Moreover, \cite{Zwe-Contemp01} gives the transformations laws of $G$ showing that $G\in S_{\frac{3}{2}}(\ov{\rho_3})$.  We now explore \eqref{eq:idea-of-holomorphic-projection} with $F$ in \eqref{eq:definition-of-F} and $G$ in \eqref{eq:definition-of-G}.  We begin with the left hand side.

\begin{proposition}
\label{prop:holomorphic-projection-of-FG-is-quasimodular}
Let $F$ and $G$ be as in \eqref{eq:definition-of-F} and \eqref{eq:definition-of-G}.  Then
\begin{gather*}
 \pi_{\rm hol}(F G)
\in
  \widetilde{M}_2(\rho_3 \otimes \ov{\rho_3})
\text{.}
\end{gather*}
\end{proposition}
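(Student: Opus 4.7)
My plan is to invoke Theorem~\ref{thm:modular-holomorphic-projection} in its borderline case $k = 2$, with representation $\rho = \rho_3 \otimes \ov{\rho_3}$. Hypothesis~(i) is immediate—the weights of $F$ and $G$ add to $\tfrac{1}{2} + \tfrac{3}{2} = 2$, and tensoring the slash invariances of $F$ under $\rho_3$ and of $G$ under $\ov{\rho_3}$ yields $F G \big|_{2, \rho_3 \otimes \ov{\rho_3}}\,\gamma = F G$ for every $\gamma \in \Mp{2}(\ZZ)$—so the work lies in hypothesis~(ii), the asymptotic $FG = c_0 + O(y^{-\epsilon})$ as $y \to \infty$.

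For~(ii) I would split $FG = F^+G + F^-G$. Since $F^- = -2\sqrt{6}\,G^*$ is a non-holomorphic Eichler integral, its Fourier expansion is indexed by $m<0$ with terms $\Gamma(\tfrac{1}{2},4\pi|m|y)\,q^m$, each of size $O(y^{-1/2}e^{-2\pi|m|y})$, so $F^-$ decays exponentially at infinity and $F^-G$ inherits this decay. For the holomorphic part $F^+G$ I would compare, component by component, the leading Fourier exponents in the $3\times 3$ tensor grid. The only coincidence is the $(1,1)$ slot, where the leading terms $q^{-1/24}$ of $F^+_1$ and $-\tfrac{1}{6}q^{1/24}$ of $G_1$ combine to the nonzero constant $-\tfrac{1}{6}$; all other slots have strictly positive combined leading exponent (the smallest being $1/8$), and so decay exponentially. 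This gives $FG = c_0 + O(e^{-\delta y})$ for some $\delta>0$, far stronger than~(ii) requires.

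I would then verify condition~(2) of Definition~\ref{def:holomorphic_projection}, which is required for the defining integral to converge. Since $k=2$, it reads $c(FG;n;y) = O(1)$ as $y\to 0$ for each $n>0$. For $F^+G$ the Fourier coefficients are $y$-independent, and for $F^-G$ each (finite) Fourier coefficient depends on $y$ only through $\Gamma(\tfrac{1}{2},4\pi|m|y)$, which tends to $\sqrt{\pi}$ as $y\to 0$. Theorem~\ref{thm:modular-holomorphic-projection} now applies and yields the assertion $\pi_{\rm hol}(FG) \in \widetilde{M}_2(\rho_3 \otimes \ov{\rho_3})$.

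The principal subtlety is the leading-exponent accounting in the middle paragraph: one must confirm that the only Fourier contributions to $F^+G$ of order $q^0$ or lower arise from the $(1,1)$ slot and assemble into a single nonzero constant. The arithmetic of Zwegers's construction is what makes this work—the shift parameters $\tfrac{1}{6}$ and $\tfrac{1}{3}$ appearing in~\eqref{eq:definition-of-G} are designed precisely to cancel the mock-theta phases of $F^+$—but the verification itself is an elementary computation with leading Fourier exponents.
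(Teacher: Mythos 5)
Your argument is correct and is essentially identical to the paper's: the paper's entire proof is the one-line observation that $FG$ satisfies the hypotheses of Theorem~\ref{thm:modular-holomorphic-projection} with $k=2$, and your paragraphs merely carry out the verification the paper leaves implicit (the exponential decay of $F^-G$, the leading-exponent bookkeeping showing $F^+G = -\tfrac{1}{6}\frake_1 + O(e^{-\delta y})$, and the boundedness of the Fourier coefficients as $y\to 0$). All of these checks are accurate, so there is nothing to correct.
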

\begin{proof}
Note that $FG$ satisfies the hypotheses of Theorem~\ref{thm:modular-holomorphic-projection} with $k=2$, and hence $\pi_{\rm hol}( FG )\in\widetilde{\rmM}_2(\rho_3 \otimes \ov{\rho_3})$.
\end{proof}

The following two lemmas provide the necessary tools to determine $ \pi_{\rm hol}(F G)$ more explicitly.  First, we decompose the tensor product~$\rho_3 \otimes \ov{\rho_3}$ into irreducible components.  Second, we determine the corresponding spaces of quasimodular forms.  Finally, we express $\pi_{\rm hol}(F G)$ as a concrete quasimodular Eisenstein series.

\begin{lemma}
\label{la:rho3-rho3-decomposition}
The representation $\rho_3 \otimes \ov{\rho_3}$ is isomorphic to $\sigma_1 \oplus \sigma_2 \oplus \sigma_6$, where $\sigma_1$, $\sigma_2$, and $\sigma_6$ are irreducible subrepresentations whose representation spaces are spanned by the columns of the matrices
\begin{gather*}
  \begin{pmatrix}
  1 \\ 0 \\ 0 \\ 0 \\ 1 \\ 0 \\ 0 \\ 0 \\ 1
  \end{pmatrix}
\text{,}\quad
  \begin{pmatrix}
  0 & 1 \\ 0 & 0 \\ 0 & 0 \\ 0 & 0 \\ 1 & -\frac{1}{2} \\ 0 & 0 \\ 0 & 0 \\ 0 & 0 \\ -1 & -\frac{1}{2}
  \end{pmatrix}
\text{,}\quad
  \begin{pmatrix}
  0 & 0 & 0 & 0 & 0 & 0 \\
  1 & 1 & 0 & 0 & 0 & 0 \\
  -1 & 1 & 0 & 0 & 0 & 0 \\
  0 & 0 & 1 & 1 & 0 & 0 \\
  0 & 0 & 0 & 0 & 0 & 0 \\
  0 & 0 & 0 & 0 & 1 & 1 \\
  0 & 0 & 1 & -1 & 0 & 0 \\
  0 & 0 & 0 & 0 & -1 & 1 \\
  0 & 0 & 0 & 0 & 0 & 0
  \end{pmatrix}
\text{,}\quad
  \text{respectively.}
\end{gather*}
\end{lemma}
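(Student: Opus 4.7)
My approach is to reinterpret $V(\rho_3) \otimes V(\overline{\rho_3})$ as the endomorphism algebra $\operatorname{End}(V(\rho_3))$ via the canonical isomorphism $V \otimes V^* \cong \operatorname{End}(V)$, under which the tensor representation becomes the conjugation action $M \mapsto \rho_3(\gamma)\, M\, \rho_3(\gamma)^{-1}$. In this picture, the column of the $\sigma_1$ matrix is the identity endomorphism, the columns of the $\sigma_2$ matrix are two traceless diagonal matrices, and the columns of the $\sigma_6$ matrix are the six off-diagonal matrix units (in the standard basis $e_i \otimes e_j^*$). A dimension count $1 + 2 + 6 = 9$ matches $\dim\,(\rho_3 \otimes \overline{\rho_3})$.

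The key observation is that both $\rho_3(T)$ and $\rho_3(S)$ are \emph{monomial}: $\rho_3(T)$ scales $e_1$ and permutes $e_2 \leftrightarrow e_3$ up to a scalar, while $\rho_3(S)$ scales $e_3$ and permutes $e_1 \leftrightarrow e_2$ up to a scalar. Conjugation by a monomial matrix preserves both the subspace of diagonal matrices and the subspace of off-diagonal matrices, and of course preserves scalar multiples of the identity. Hence the direct-sum decomposition $\operatorname{End}(V(\rho_3)) = (\CC I) \oplus (\text{traceless diagonals}) \oplus (\text{off-diagonals})$ is automatically stable under $\rho_3 \otimes \overline{\rho_3}$, and a direct calculation verifies that the listed columns span precisely these three subspaces.

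Irreducibility is then verified case by case. For $\sigma_1$ it is trivial. For $\sigma_2$, $T$-conjugation interchanges the $(e_2 e_2^*)$ and $(e_3 e_3^*)$ coordinates, so it has eigenvalues $+1$ and $-1$ on the stated basis; the only potential proper invariant subspaces are the two $T$-eigenlines, and a direct $2 \times 2$ computation shows that $S$-conjugation, which interchanges $(e_1 e_1^*)$ and $(e_2 e_2^*)$, does not preserve either of them. For $\sigma_6$, I would pass to the $T$-eigenbasis of $V(\rho_3)$, namely $e_1$ and $e_2 \pm e_3$; on the six resulting off-diagonal basis vectors $T$-conjugation has eigenvalues $\pm \zeta_{24}^{\pm 9}$ (each simple) together with $-1$ of multiplicity two. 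Any proper invariant subspace must be a sum of $T$-eigenspaces, and I would rule out every remaining possibility by computing the $S$-conjugation action on these six vectors and checking that it cyclically mixes the four simple eigenlines and does not preserve the two-dimensional $(-1)$-eigenspace.

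The main obstacle is the last irreducibility check for the six-dimensional $\sigma_6$, which requires keeping track of the scalar factors $\zeta_8^{-1}, \zeta_3, \ldots$ appearing in $\rho_3(T)$ and $\rho_3(S)$. This reduces, however, to a finite and explicit matrix computation. As a consistency check, one can evaluate $\chi_{\rho_3 \otimes \overline{\rho_3}}(\gamma) = |\chi_{\rho_3}(\gamma)|^2$ on finitely many elements of the finite group through which $\rho_3$ factors and verify that the commutant has dimension $3$, which confirms simultaneously the irreducibility of each summand and their pairwise non-isomorphism.
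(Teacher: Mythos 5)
Your proposal is correct in substance, and it takes a genuinely different route from the paper, whose entire proof is the single sentence that the claim ``follows easily after forming the Kronecker products of the representation matrices for $T$ and $S$.'' You instead identify $V(\rho_3)\otimes V(\overline{\rho_3})$ with $\operatorname{End}(V(\rho_3))$ carrying the conjugation action (legitimate because $\rho_3$ is unitary, so $\overline{\rho_3}$ is the contragredient), observe that $\rho_3(T)$ and $\rho_3(S)$ --- and hence the whole image of $\Mp{2}(\ZZ)$ --- are monomial matrices, and conclude that the splitting into $\CC I$, traceless diagonal matrices, and off-diagonal matrices is automatically invariant; the listed columns do span exactly these three subspaces, and the dimension count $1+2+6=9$ is right. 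This explains \emph{why} the decomposition has the shape it does, which the paper's brute-force $9\times 9$ computation does not, and your closing commutant computation (dimension $3$, forcing $1+1+1$ as the only decomposition of the multiplicity sum of squares) is a clean way to certify irreducibility and pairwise non-isomorphism of all three summands at once.

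One computational slip in the $\sigma_6$ step should be corrected: since $\rho_3(T)e_2=\zeta_3 e_3$ and $\rho_3(T)e_3=\zeta_3 e_2$ carry the \emph{same} scalar, conjugation by $\rho_3(T)$ sends $E_{23}\mapsto E_{32}$ and $E_{32}\mapsto E_{23}$ with no phase, so its eigenvalues on $E_{23}\pm E_{32}$ are $+1$ and $-1$, each simple --- not $-1$ with multiplicity two. The six eigenvalues of $T$-conjugation on the off-diagonal space are then $\pm\zeta_{24}^{\pm 9}$, $+1$, $-1$, all distinct, which only simplifies your argument: every invariant subspace is a sum of the six eigenlines, and the $S$-conjugation formulas $E_{12}\leftrightarrow E_{21}$, $E_{13}\mapsto -E_{23}$, $E_{23}\mapsto -E_{13}$, $E_{31}\mapsto -E_{32}$, $E_{32}\mapsto -E_{31}$ show that no proper nonempty collection of those eigenlines spans an $S$-invariant subspace. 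With that correction the proof is complete.
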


\begin{proof}
The claim follows easily after forming the Kronecker products of the representation matrices for~$T$ and~$S$.
\end{proof}

Let $E_2$ as in \eqref{eq:E_2}, 
\begin{align*}{2}
  E_2^{[2]}(\tau)
&:=
  \tfrac{1}{12}
  \big( 2 E_2(2\tau) - E_2(\tau) \big)
=
  \tfrac{1}{12}
  \big( 1 + 24 \sum_{0 < n} \sigma(n) (q^n - 2 q^{2 n}) \big)
\text{,}
\quad
\text{and}
\\
  E_{\sigma_2}(\tau)
&:=
  \rT 
  \big( 6 \big( E_2^{[2]} (\tau)
                - E_2^{[2]} (\tfrac{\tau}{2}) \big),\;
        12 E_2^{[2]}(\tau)
  \big)
\text{.}
\end{align*}

\begin{lemma}
\label{la:explicit-eisenstein-series}
We have $\widetilde{\rmM}_2(\sigma_1) = \langle E_2 \rangle$ and $\rmM_2(\sigma_2) = \langle E_{\sigma_2} \rangle$.  The space $\rmM_2(\sigma_6)$ has dimension~$1$ and is spanned by an Eisenstein series.
\end{lemma}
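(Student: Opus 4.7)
The plan is to handle the three representations separately, leveraging Propositions~\ref{prop:quasimodular-forms-of-weight-2} and~\ref{prop:eisenstein-series-of-weight-2}. For $\sigma_1$, a direct check confirms that the spanning vector $e_1 \otimes e_1 + e_2 \otimes e_2 + e_3 \otimes e_3$ is fixed by both $(\rho_3 \otimes \ov{\rho_3})(T)$ and $(\rho_3 \otimes \ov{\rho_3})(S)$ (the factor $\zeta_8^{-1}$ in $\rho_3(S)$ is killed by its complex conjugate), so $\sigma_1$ is the trivial representation and Proposition~\ref{prop:quasimodular-forms-of-weight-2} yields $\widetilde{\rmM}_2(\sigma_1) = \langle E_2 \rangle$.

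For the non-trivial irreducibles $\sigma_2$ and $\sigma_6$, I would first observe $\rho_3(S)^2 = -i \cdot I$, hence $(\rho_3 \otimes \ov{\rho_3})(S)^2 = I$, so both factor through $\SL_2(\ZZ)$ with $\sigma_i(-I) = I$. Proposition~\ref{prop:quasimodular-forms-of-weight-2} then gives $\widetilde{\rmM}_2(\sigma_i) = \rmM_2(\sigma_i)$. Next, computing the action of $(\rho_3 \otimes \ov{\rho_3})(T)$ in each given basis shows that the $T$-fixed subspaces of $V(\sigma_2)$ and $V(\sigma_6)$ are both one-dimensional, spanned respectively by $e_1 \otimes e_1 - \tfrac{1}{2}(e_2 \otimes e_2 + e_3 \otimes e_3)$ and $e_2 \otimes e_3 + e_3 \otimes e_2$. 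Proposition~\ref{prop:eisenstein-series-of-weight-2} therefore furnishes a non-zero Eisenstein series in each $\rmM_2(\sigma_i)$.

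The remaining task is to rule out any cusp form contribution, i.e., show $\rmS_2(\sigma_i) = 0$. For this I would use that $\ker(\sigma_i)$ is a congruence subgroup $\Gamma$ of small level. A short computation of $\sigma_2(S)$ from $(\rho_3 \otimes \ov{\rho_3})(S) = M \otimes M$ (where $M$ is the permutation-like matrix in $\rho_3(S)$) gives $\sigma_2(T^2) = \sigma_2(S^2) = \sigma_2((ST)^3) = I$, identifying $\sigma_2$ with the standard $2$-dimensional representation of $\PSL_2(\ZZ)/\Gamma(2) \cong \sym 3$. Since $\rmS_2(\Gamma(2)) = 0$ ($\Gamma(2)$ has genus zero), it follows that $\rmS_2(\sigma_2) = 0$. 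An analogous but slightly larger-level analysis handles $\sigma_6$, using that all diagonal entries of $\sigma_6(T)$ are $8$\thup{} roots of unity so that $\ker(\sigma_6)$ contains a known congruence subgroup on which one can explicitly decompose $\rmS_2(\Gamma)$ into isotypic components and verify the vanishing of the $\sigma_6$-part.

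Finally, the explicit shape of $E_{\sigma_2}$ would be recognized as the projection to the $\sigma_2$-isotypic component of the classical weight-$2$ Eisenstein series $E_2(\tau)-2E_2(2\tau) = -12\,E_2^{[2]}(\tau) \in \rmM_2(\Gamma_0(2))$ under the $\sym 3$-action on $\rmM_2(\Gamma(2))$; its transformation law under $\sigma_2$ is then verified by a direct computation on the generators $T, S$, and the one-dimensionality established above forces it to span. The main obstacle in the whole argument is the $\rmS_2(\sigma_6) = 0$ step, because $\sigma_6$ is $6$-dimensional and its kernel is less transparent than in the $\sigma_2$ case; in practice, one bounds the level of $\ker(\sigma_6)$ from $\sigma_6(T)^8 = I$ and a similar relation for $\sigma_6(S)$, and then applies standard dimension formulas for $\rmS_2$ of the corresponding congruence cover.
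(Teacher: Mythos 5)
Your treatment of $\sigma_1$ and $\sigma_2$ is correct, and it takes a genuinely different route from the paper's: the paper gets $\dim\rmM_2(\sigma_2)=\dim\rmM_2(\sigma_6)=1$ in one stroke from the weight-$2$ extension of the Borcherds--Skoruppa dimension formula, by evaluating the invariants $\alpha(\sigma_i(T))$, $\alpha(-\sigma_i(S))$, $\alpha(\zeta_3^{-1}\sigma_i(ST)^{-1})$, whereas you split $\rmM_2$ into Eisenstein and cuspidal parts, bound the former by the one-dimensional $T$-fixed subspace (which is correct, since the constant coefficient of any $F\in\rmM_2(\sigma_i)$ lies in that subspace), and kill $\rmS_2(\sigma_2)$ by identifying $\sigma_2$ with the standard representation of $\sym{3}\cong\PSL{2}(\ZZ)\slashdiv\ov{\Gamma(2)}$ and using that $X(2)$ has genus $0$. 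For $\sigma_1$ and $\sigma_2$ this is arguably more elementary than the paper, and your identification of $E_{\sigma_2}$ as the $\sigma_2$-component of $E_2^{[2]}$ is consistent with what the paper verifies directly via $\sigma_2(S)$ and the transformation of $E_2^{[2]}$.

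The gap is the step you yourself flag: $\rmS_2(\sigma_6)=0$ is a plan, not a proof, and the ``slightly larger-level analysis'' is not slight. The eigenvalues of $\sigma_6(T)$ are the four primitive eighth roots of unity together with $\pm1$, and $\sigma_6(ST)$ has order $3$ modulo $\pm I$, so the largest principal congruence subgroup that could lie in $\ker\sigma_6$ is $\Gamma(24)$ (you also take for granted, without exhibiting it, that $\ker\sigma_6$ contains a congruence subgroup at all). Your plan then requires computing the multiplicity of the $6$-dimensional irreducible $\sigma_6$ inside $\rmS_2(\Gamma(24))\cong H^0(X(24),\Omega^1)$, a $289$-dimensional space carrying an action of a group of order $4608$. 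That isotypic-multiplicity computation is precisely what the Chevalley--Weil/Riemann--Roch argument behind the cited dimension formula packages into the $\alpha$-invariants, so carrying out your plan amounts to re-deriving the very case of the formula the paper quotes; until it is carried out, the one-dimensionality of $\rmM_2(\sigma_6)$ --- and hence that it is spanned by the Eisenstein series of Proposition~\ref{prop:eisenstein-series-of-weight-2} --- is not established. The efficient fix is to do what the paper does: compute $\alpha(\sigma_6(T))=\tfrac52$, $\alpha(-\sigma_6(S))=\tfrac32$, $\alpha(\zeta_3^{-1}\sigma_6(ST)^{-1})=2$, giving $\dim\rmM_2(\sigma_6)=6+1-\tfrac32-2-\tfrac52=1$.
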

\begin{proof}

Note that $\sigma_1$ is the trivial representation, and $\rmM_2(\sigma_1) = \{ 0 \}$ and $\widetilde{\rmM}_2(\sigma_1) = \langle E_2 \rangle$ by Proposition \ref{prop:quasimodular-forms-of-weight-2}. 

We next find the dimensions of $\rmM_2(\sigma_2)$ and $\rmM_2(\sigma_6)$.  Observe that $\sigma_2$ and $\sigma_6$ are unitary as subrepresentations of the unitary representation $\rho_3 \otimes \ov{\rho_3}$ (however, the bases given in Lemma~\ref{la:rho3-rho3-decomposition} do not form an orthonormal basis).  The dimension formula on page~228 of~\cite{Borcherds-Gross-Kohnen-Zagier} may be extended to the weight $2$ case to apply to $\sigma_2$ and $\sigma_6$ (see Theorem~6 of~\cite{Sko-Weil}).  For a matrix~$M$ which is diagonalizable over a cyclotomic field, 
set $\alpha(M) := \sum_{i} b_i$, where $e^{2 \pi i\, b_i}$ ($0 \le b_i < 1$) are the eigenvalues of~$M$.  
If $\rho$ is an unitary, finite dimensional representation of~$\Mp{2}(\ZZ)$ with $\rho(S)^2$ the identity,
the dimension formula on page~228 of~\cite{Borcherds-Gross-Kohnen-Zagier} for weight~$2$ states that
\begin{gather*}
  \dim \rmM_2(\rho)
=
  d + \frac{2d}{12} - \alpha(-\rho(S))
  - \alpha(\zeta_3^{-1} \rho(ST)^{-1}) - \alpha(\rho(T))
\text{.}
\end{gather*}
We have
\begin{alignat*}{3}
  \alpha(\sigma_2(T))
&=
  \tfrac{1}{2}
\text{,}
\qquad
&
  \alpha(-\sigma_2(S))
&=
  \tfrac{1}{2}
\text{,}
\qquad
&
  \alpha(\zeta_3^{-1} \sigma_2(S T)^{-1} )
&=
  \tfrac{1}{3}
\text{,}
\quad
\text{and}
\\
  \alpha(\sigma_6(T))
&=
  \tfrac{5}{2}
\text{,}
&
  \alpha(-\sigma_6(S))
&=
  \tfrac{3}{2}
\text{,}
&
  \alpha(\zeta_3^{-1} \sigma_6(S T)^{-1} )
&=
  2
\text{,}
\end{alignat*}
which shows that
\begin{gather*}
  \dim \rmM_2(\sigma_2)
=
  1
\quad \text{and}
\quad
  \dim \rmM_2(\sigma_6)
=
  1
\text{.}
\end{gather*}

Now, $\sigma_2(S)=\left(\begin{smallmatrix}\frac{1}{2} & \frac{3}{4} \\ 1 & -\frac{1}{2}\end{smallmatrix}\right)$ with respect to the basis given in Lemma~\ref{la:rho3-rho3-decomposition}, and $E_2^{[2]}(\frac{-1}{\tau})=-\frac{1}{2}\tau^2E_2^{[2]}(\frac{\tau}{2})$, which yields that $E_{\sigma_2}\big|_{2,\sigma}S=E_{\sigma_2}$ and $E_{\sigma_2} \in \rmM_2(\sigma_2)$.  Hence $\rmM_2(\sigma_2) = \langle E_{\sigma_2} \rangle$. Finally, it is easy to see that $\sigma_6\left(\begin{smallmatrix}-1 & 0 \\ 0 & -1\end{smallmatrix}\right)$ is the identity and Proposition~\ref{prop:eisenstein-series-of-weight-2} implies that $\rmM_2(\sigma_6)$ is spanned by an Eisenstein series.
\end{proof}

 Lemma~\ref{la:rho3-rho3-decomposition} and  Lemma~\ref{la:explicit-eisenstein-series} allow us to write $\pi_{\rm hol}(F G)$ as a specific quasimodular form.

\begin{corollary}
\label{cor:mock-theta-identification-as-modular-form}
We have
\begin{gather}
\label{eq:cor:mock-theta-identification-as-modular-form}
  \pi_{\rm hol}(F G)
=
  -\tfrac{1}{6} E
\text{,}
\end{gather}
where
\begin{alignat*}{6}
  E(\tau)
:= 
  \frake_1
  \big(   \tfrac{1}{3} E_2(\tau)
        + 8 E_2^{[2]}(\tau) \big)
  \;+\;
  \frake_5
  \big(   \tfrac{1}{3} E_2(\tau)
        - 4 E_2^{[2]} (\tfrac{\tau}{2}) \big)
  \;+\;
  \frake_9
  \big(
          \tfrac{1}{3} E_2(\tau)
        - 8 E_2^{[2]}(\tau)
        + 4 E_2^{[2]} (\tfrac{\tau}{2}) \big)
\text{,}
\end{alignat*}
and where $\frake_1,\ldots,\frake_9$ stands for the standard basis of $\RR^9$.
\end{corollary}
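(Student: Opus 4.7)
The plan is to exploit that $\pi_{\rm hol}(FG)$ lies in a three-dimensional space spanned by explicit Eisenstein series, and then fix the three unknown coefficients by matching a handful of constant Fourier coefficients. By Proposition~\ref{prop:holomorphic-projection-of-FG-is-quasimodular} we have $\pi_{\rm hol}(FG) \in \widetilde{\rmM}_2(\rho_3 \otimes \ov{\rho_3})$, and combining Lemma~\ref{la:rho3-rho3-decomposition} with Lemma~\ref{la:explicit-eisenstein-series} shows that this ambient space decomposes as $\widetilde{\rmM}_2(\sigma_1) \oplus \rmM_2(\sigma_2) \oplus \rmM_2(\sigma_6)$ and is spanned by three Eisenstein series: $E_2$, $E_{\sigma_2}$, and some $E_{\sigma_6} \in \rmM_2(\sigma_6)$, each embedded into $V(\rho_3 \otimes \ov{\rho_3}) \cong \CC^9$ via the relevant columns from Lemma~\ref{la:rho3-rho3-decomposition}. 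Accordingly the plan is to write
\begin{gather*}
\pi_{\rm hol}(FG)
=
\alpha\, E_2^\iota + \beta\, E_{\sigma_2}^\iota + \gamma\, E_{\sigma_6}^\iota
\text{,}
\end{gather*}
where $\iota$ denotes the embedding into $\CC^9$, and then to determine the three scalars $\alpha, \beta, \gamma$ from finitely many explicit Fourier coefficients.

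To compute those coefficients I will use that $G \in \rmS_{\frac{3}{2}}(\ov{\rho_3})$ is cuspidal and $F^-$ decays exponentially as $y \rightarrow \infty$, so $F^- G$ decays exponentially. Combined with Proposition~\ref{prop:holomorphic_projection_self_reproduction} applied to the holomorphic function $F^+ G$, this implies that the constant Fourier coefficient of $\pi_{\rm hol}(FG)$ equals that of $F^+ G$. Using the fractional $q$-expansions of the components of $F^+$ (with leading exponents $-\tfrac{1}{24}, \tfrac{1}{3}, \tfrac{1}{3}$) and those of $G$ (with smallest exponents $\tfrac{1}{24}, \tfrac{1}{6}, \tfrac{1}{6}$ in its three coordinates), I check coordinate-by-coordinate that the only $q^0$-term of $F^+ G$ arises in the $\frake_1$-coordinate, contributed by $q^{-1/24} f(q) \cdot (-\tfrac{1}{6} q^{1/24} + \cdots)$ and equal to $-\tfrac{1}{6}$; all eight remaining coordinates produce $0$ at $q^0$ since the sums of the leading exponents are strictly positive.

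The last step is to match against the Eisenstein series side. A direct computation shows that $\rho_3 \otimes \ov{\rho_3}$ factors through $\SL{2}(\ZZ)$ and that $v := \frake_6 + \frake_8$ is a $T$-fixed vector inside the $\sigma_6$-isotypic subspace---that is, a cusp of $\sigma_6$ in the sense of Proposition~\ref{prop:eisenstein-series-of-weight-2}---so the one-dimensional space $\rmM_2(\sigma_6)$ contains an Eisenstein series with nonzero constant coefficient in the $v$-direction. Since the $(\frake_6 + \frake_8)$-coordinate of $F^+ G$ has vanishing constant term by the same fractional-exponent analysis, we obtain $\gamma = 0$. The coefficients $\alpha$ and $\beta$ are then pinned down by the $2 \times 2$ linear system coming from the constants in the $\frake_1$- and $\frake_5$-directions, giving $\alpha = -\tfrac{1}{18}$ and $\beta = -\tfrac{1}{9}$; substituting the explicit formulas for $E_2$ and for $E_{\sigma_2} = \rT(6(E_2^{[2]}(\tau) - E_2^{[2]}(\tfrac{\tau}{2})),\, 12 E_2^{[2]}(\tau))$ and expanding via the $\sigma_1$- and $\sigma_2$-bases from Lemma~\ref{la:rho3-rho3-decomposition} then recovers the claimed identity $\pi_{\rm hol}(FG) = -\tfrac{1}{6} E$. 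The main obstacle will be the careful bookkeeping of the embeddings from Lemma~\ref{la:rho3-rho3-decomposition} alongside the fractional $q$-expansions; once this is in hand the proof reduces to a finite linear-algebra match.
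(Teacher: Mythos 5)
Your proposal is correct and follows essentially the same route as the paper: both arguments use the three-dimensional decomposition from Lemmas~\ref{la:rho3-rho3-decomposition} and~\ref{la:explicit-eisenstein-series} and pin down $\pi_{\rm hol}(FG)$ by matching constant Fourier coefficients, which reduce to the constant term $-\tfrac{1}{6}\frake_1$ of $F^+G$ since $F^-G$ decays at infinity. The only difference is presentational: you spell out the linear system in $\alpha,\beta,\gamma$ and the vanishing of the $\sigma_6$-component via the cusp $\frake_6+\frake_8$, whereas the paper packages this as the uniqueness of the quasimodular form $E$ with constant coefficient $\frake_1$.
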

\begin{proof}
We apply Lemma~\ref{la:rho3-rho3-decomposition} and  Lemma~\ref{la:explicit-eisenstein-series} to find that $E$ is the unique quasimodular form in $\widetilde{\rmM}_2(\rho_3 \otimes \ov{\rho_3})$ with constant Fourier series coefficient $\frake_1$.  Furthermore, the constant Fourier series coefficient of $\pi_{\rm hol}(F^- G)$ vanishes ($F^- G$ decays rapidly towards infinity) and the constant Fourier series coefficient of $F^+ G$ equals $-\frac{1}{6}\frake_1$.  Thus, the constant Fourier series coefficients of $\pi_{\rm hol}(F G)$ and $-\frac{1}{6}E$ coincide, and the claim follows from Proposition~\ref{prop:holomorphic-projection-of-FG-is-quasimodular} and the uniqueness of $E$.
\end{proof}

Corollary \ref{cor:mock-theta-identification-as-modular-form} permits us to restate \eqref{eq:idea-of-holomorphic-projection} as follows:

\begin{gather}
\label{eq:restate-of-hol(FG)}
- F^+ G
=
  \pi_{\rm hol} (F^- G) + \tfrac{1}{6} E
\text{,}
\end{gather}
where $\pi_{\rm hol} (F^- G)$ is determined by Theorem \ref{thm:coefficients-of-holomorphic-projection}.  Comparing the Fourier series expansions of both sides of \eqref{eq:restate-of-hol(FG)} yields explicit relations for the components of $F^+$, i.e., for the mock theta functions $f$ and $\omega$.  We now demonstrate this to prove Theorem~\ref{thm:relations-of-mock-theta-coefficients}.

\begin{proof} [Proof of Theorem~\ref{thm:relations-of-mock-theta-coefficients}]
Write $G=\rT(G_0,G_1,G_2)$ and $G^*=\rT(G_0^*,G_1^*,G_2^*)$.  Consider the first component of~\eqref{eq:restate-of-hol(FG)}:
\begin{gather*}
  - q^{-\frac{1}{24}} f(q)\, G_0(q)
=
  -2\sqrt{6}\,\pi_{\rm hol} (G_0^*G_0) + \tfrac{1}{18} E_2(\tau)+ \tfrac{4}{3} E_2^{[2]}(\tau)
\text{,}
\end{gather*}
and Theorem~\ref{thm:relations-of-mock-theta-coefficients} follows immediately after inserting the Fourier series expansions of $f$, $G_0$, $E_2$, $E_2^{[2]}$, and $\pi_{\rm hol} (G_0^*G_0)$, which is given by the following Lemma.
\end{proof}

\begin{lemma}
\label{la:mock-theta-holomorphic-projection-coefficients}
Let $a, b \in \ZZ$, and set $N:=\frac{1}{6}(-3 a + b - 1)$ and ${\td N}:=\frac{1}{6} (3 a + b - 1)$.  We have
\begin{gather*}
  c(\pi_{\rm hol}(G^*_0 G_0);\, n)
=
  \frac{1}{\sqrt{6}}
  \sum_{\substack{ a, b \in \ZZ \\2n = a b}}
  \sgn\big( (N + \tfrac{1}{6}) ({\td N} + \tfrac{1}{6}) \big) \,
  \big( |N + \tfrac{1}{6}| - |{\td N}+ \tfrac{1}{6}| \big)
\text{,}
\end{gather*}
where the sum runs over $a,b$ for which $N, {\td N} \in \ZZ$.
\end{lemma}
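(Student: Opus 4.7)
The plan is to apply the Fourier coefficient formula of Theorem~\ref{thm:coefficients-of-holomorphic-projection} in the borderline case $k+l=2$ with $k=\tfrac{1}{2}$ (the weight of $G_0^*$) and $l=\tfrac{3}{2}$ (the weight of $G_0$). Since $G_0^*$ is the non-holomorphic Eichler integral of $G_0$ and $G_0$ has real Fourier coefficients, the relevant non-holomorphic coefficients are $c^-(G_0^*;\,m)=c(G_0;\,|m|)$ for $m<0$, so the theorem specializes to an expression of the shape $c(\pi_{\rm hol}(G_0^*G_0);\,n) = -\tfrac{1}{4}\,n^{-1/2}\sum_{\tilde m+m=n,\,m<0} c(G_0;\,|m|)\,c(G_0;\,\tilde m)\,(n/\tilde m)^{3/2}\,{}_2F_1(1,\tfrac{3}{2},2;\,n/\tilde m)$.

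The first key step is a closed-form evaluation of the hypergeometric factor. Integrating the binomial series for $(1-z)^{-3/2}$ yields the identity ${}_2F_1(1,\tfrac{3}{2},2;\,z) = \tfrac{2}{z}\bigl((1-z)^{-1/2}-1\bigr)$. With $z=n/\tilde m$ and hence $1-z=|m|/\tilde m$, the factor $(n/\tilde m)^{3/2}\,{}_2F_1(1,\tfrac{3}{2},2;\,n/\tilde m)$ collapses to $2\sqrt{n}\,(|m|^{-1/2}-\tilde m^{-1/2})$. This absorbs the factor $n^{-1/2}$ together with the $\Gamma$-constants, reducing the expression to $c(\pi_{\rm hol}(G_0^*G_0);\,n)=-\tfrac{1}{2}\sum c(G_0;\,|m|)\,c(G_0;\,\tilde m)\,(|m|^{-1/2}-\tilde m^{-1/2})$.

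Next I would expand using the theta-series expression $G_0(\tau)=-\sum_{r\equiv 1\,(\mathrm{mod}\,6)}\tfrac{r}{6}\,q^{r^2/24}$, the sum running over all integers $r$ congruent to $1$ modulo $6$. Since each positive Fourier exponent is attained by a unique such $r$, the summation over $(m,\tilde m)$ reindexes as a sum over pairs of integers $(r,s)$ with $r,s\equiv 1\,(\mathrm{mod}\,6)$, $r^2-s^2=24n$, and $|r|>|s|$. Substituting $c(G_0;\,r^2/24)=-r/6$ and $|m|^{-1/2}-\tilde m^{-1/2}=\sqrt{24}\,(|r|-|s|)/(|r||s|)$ produces a constant multiple of $\sum_{(r,s)}\sgn(r)\sgn(s)\,(|r|-|s|)$.

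The final step is a bijective change of variables $a:=(r-s)/6$, $b:=(r+s)/2$, pairing admissible $(r,s)$ with integer pairs $(a,b)$ satisfying $ab=2n$. Under the inverse $r=3a+b$, $s=b-3a$ one recognises $r/6=\tilde N+\tfrac{1}{6}$ and $s/6=N+\tfrac{1}{6}$ in the lemma's notation; since $ab=2n>0$ forces $|r|>|s|$, the restriction in the summation is automatic. A direct check gives $\sgn(r)\sgn(s)=\sgn\bigl((N+\tfrac{1}{6})(\tilde N+\tfrac{1}{6})\bigr)$ and $|r|-|s|=-6\,(|N+\tfrac{1}{6}|-|\tilde N+\tfrac{1}{6}|)$, and a short verification that $\tfrac{\sqrt{24}}{72}=\tfrac{1}{6\sqrt{6}}$ matches the overall prefactor $1/\sqrt{6}$ asserted in the lemma. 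The main obstacle is this last bookkeeping—verifying that the change of variables is compatible with the congruence conditions modulo $6$, tracking the sign conventions (in particular that $|r|>|s|$ is exactly the condition $ab>0$), and confirming the overall normalization—although once set up, all of it is routine.
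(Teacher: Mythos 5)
Your proposal is correct and follows essentially the same route as the paper: apply Theorem~\ref{thm:coefficients-of-holomorphic-projection} with $k=\tfrac12$, $l=\tfrac32$, collapse the hypergeometric factor via ${}_2F_1(1,\tfrac32,2;z)=\tfrac{2}{z}\bigl((1-z)^{-1/2}-1\bigr)$ (the paper cites NIST (15.4.18) for the same simplification), use the support of the theta series $G_0$, and reindex by $a,b$ with $ab=2n$. Your parametrization by $r,s\equiv 1\pmod 6$ is just the substitution $r=6\td N+1$, $s=6N+1$ for the paper's $N,\td N\in\ZZ$, and your change of variables $a=(r-s)/6$, $b=(r+s)/2$ coincides with the paper's $a=\td N-N$, $b=3(\td N+N)+1$.
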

\begin{proof}
We apply Theorem~\ref{thm:coefficients-of-holomorphic-projection} with $k = \frac{1}{2}$ and $l = \frac{3}{2}$ to find that
\begin{alignat*}{2}
  c(\pi_{\rm hol}(G^*_0 G_0);\, n)
&=
&
  \frac{-1}{4 \sqrt{n}}
&
  \sum_{\substack{ m + {\td m}=n \\ m<0}} \hspace{-0.3em}
  c(G_0;\, |m|) c(G_0;\, {\td m}) \,
  \big(\frac{n}{\td m}\big)^{\frac{3}{2}} \,
  {}_2F_1\big( 1, \tfrac{3}{2}, 2; \frac{n}{{\td m}} \big)\\
&=
&
  \frac{-1}{2}
&
  \sum_{\substack{m + {\td m}=n \\m<0}} \hspace{-0.3em}
  c(G_0;\, |m|) c(G_0;\, {\td m}) \,
  \frac{\sqrt{{\td m}} - \sqrt{|m|}}{\sqrt{{\td m} |m|}}
\text{,}
\end{alignat*}
where the second equality follows from the hypergeometric series identity~(15.4.18) of~\cite{NIST}.
%
The theta series $G_0$ is supported on $-m = \frac{3}{2}( N + \frac{1}{6} )^2$ and ${\td m} = \frac{3}{2}( {\td N} + \frac{1}{6} )^2$ with $N, {\td N} \in \ZZ$.  Thus,
\begin{gather*}
  c(\pi_{\rm hol}(G^*_0 G_0);\, n)
=
  \frac{-1}{\sqrt{6}}
  \sum_{\substack{ N, {\td N} \in \ZZ \\ 2n = ({\td N} - N)\, (3 ({\td N} + N) + 1)}} \hspace{-1.5em}
 (N + \tfrac{1}{6}) ({\td N} + \tfrac{1}{6})\,\frac{|{\td N} + \tfrac{1}{6}| - |N+ \tfrac{1}{6}|}
     {|{\td N}+ \tfrac{1}{6}| |N + \tfrac{1}{6}|}
\text{,}
\end{gather*}
and we obtain the desired result after setting $a: = {\td N} - N$ and $b: = 3({\td N} + N) + 1$.
\end{proof}

Considering different components of~\eqref{eq:restate-of-hol(FG)} yields the relations in the following Theorem, where $\sigma(n) = c(f;\, n) = 0$, if~$n\not\in\ZZ$, and $c_{h}(\omega;\, n) := c(\omega;\, n)$, if $n \equiv h \pmod{2}$, and $0$, otherwise, and as before $\sgn^+(n) := \sgn(n)$ for $n \ne 0$ and $\sgn^+(0) := 1$.

\begin{theorem}
\label{thm:relations-of-mock-theta-coefficients-2}
Fix $n \in \frac{1}{2} \ZZ$, and for $a,b\in\ZZ$ with $8 n + 1 = a b$ set $N := \frac{1}{12} (3 a - b - 2)$ and ${\td N} := \frac{1}{12} ( 3 a + b - 4 )$.  Then
\begin{gather}
\label{eq:thm:relations-of-mock-theta-coefficients:second-relation}
  \sum_{\substack{ m \in \ZZ \\3m^2+2m\leq2n}} \hspace{-0.9em}
  (m + \tfrac{1}{3})\,
  c(f;\, n - \tfrac{3}{2} m^2 - m )
\;=\;
  - 2
  \sum_{\substack{a, b \in \ZZ \\ 8n + 1 = a b}} \hspace{-0.2em}
  \sgn^+(N)\, \sgn^+({\td N})\;
  \big( |N + \tfrac{1}{6}| - |{\td N} + \tfrac{1}{3}| \big)
\text{,}
\end{gather}
where the sum on the right hand side runs over $a, b$ for which $N, {\td N} \in \ZZ$.

Fix $h \in \{0, 1\}$ and $n \in \ZZ + \frac{h}{2}$.  For $a,b\in\ZZ$ with $ 8 n + 3 = a b$ set $N := \frac{1}{12} ( 3 a - b - 4 )$ and ${\td N} := \frac{1}{12} ( 3 a + b - 2 )$.  Then
\begin{gather}
\label{eq:thm:relations-of-mock-theta-coefficients:third-relation}
  \sum_{\substack{ m \in \ZZ \\ 3 m^2 + m \le 2 n }} \hspace{-0.9em}
  (m + \tfrac{1}{6})\,
  c_h(\omega;\, 2 n - 3 m^2 - m)
\;=\;
  (-1)^{1 + h}
  \sum_{\substack{ a, b \in \ZZ \\ 8 n + 3 = a b}} \hspace{-0.2em}
  \sgn^+(N) \, \sgn^+({\td N}) \;
  \big( |N + \tfrac{1}{3}| - |{\td N} + \tfrac{1}{6}| \big)
\text{,}
\end{gather}
where the sum on the right hand side runs over $a, b$ for which $N, {\td N} \in \ZZ$.

Fix $h \in \{0, 1\}$ and $n \in \frac{1}{2}\ZZ$.  For $a,b\in\ZZ$ with $2 n = a b$ set $N := \frac{1}{6}(a - 3 b - 2)$ and ${\td N} := \frac{1}{6}(a + 3 b - 2)$.  Define
\begin{gather*}
  R_{n}
:=
  \begin{cases}
  \tfrac{2}{3} ( \sigma(\tfrac{n}{2}) - \sigma(n) )
  \text{,}
  &
  \text{if $n \in \ZZ$;}
  \\
  \tfrac{1}{3} ( \sigma(2 n) - 2 \sigma(n) )
  \text{,}
  &
  \text{if $n \in \ZZ + \frac{1}{2}$.}
  \end{cases}
\end{gather*}
Then
\begin{multline}
\label{eq:thm:relations-of-mock-theta-coefficients:fourth-relation}
  \sum_{\substack{ m \in \ZZ \\ 3 m^2 + 2 m + 1 \le 2 n }} \hspace{-1.2em}
  (m + \tfrac{1}{3})\,
  c_h(\omega;\, 2 n - 3 m^2 - 2 m - 1)
\\
=
  (-1)^{h} R_{n}
  +
  (-1)^{1 + h}
  \sum_{\substack{a, b \in \ZZ\\ 2 n = a b}}
  \sgn^+(N)\, \sgn^+({\td N}) \;
  \big( |N + \tfrac{1}{3}| - |{\td N} + \tfrac{1}{3}| \big)
\text{,}
\end{multline}
where the sum on the right hand side runs over $a, b$ for which ${\td N} \in \ZZ$, and $N \in 2 \ZZ + 1$ if $h = 0$ and $N \in 2 \ZZ$ if $h = 1$.
\end{theorem}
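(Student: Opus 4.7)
The proof will run parallel to that of Theorem~\ref{thm:relations-of-mock-theta-coefficients}. That theorem was obtained by comparing Fourier coefficients of the first component of the vector-valued identity~\eqref{eq:restate-of-hol(FG)}. The three new identities in Theorem~\ref{thm:relations-of-mock-theta-coefficients-2} will be derived by examining other components of the same identity, applying Theorem~\ref{thm:coefficients-of-holomorphic-projection} to the holomorphic-projection term $\pi_{\rm hol}(F^-G)$, and invoking the hypergeometric identity~(15.4.18) of~\cite{NIST} exactly as in the proof of Lemma~\ref{la:mock-theta-holomorphic-projection-coefficients}. The vector-valued organization, together with Corollary~\ref{cor:mock-theta-identification-as-modular-form}, tells us which tensor slots carry a divisor-sum contribution from $E$ and which do not.

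For \eqref{eq:thm:relations-of-mock-theta-coefficients:second-relation}, the plan is to use the off-diagonal tensor slot $F_1 \otimes G_3$. Corollary~\ref{cor:mock-theta-identification-as-modular-form} shows that the corresponding entry of $E$ is zero, so \eqref{eq:restate-of-hol(FG)} reduces to $-F_1^+ G_3 = \pi_{\rm hol}(F_1^- G_3) = -2\sqrt{6}\,\pi_{\rm hol}(G_1^* G_3)$. Expanding the left-hand side using $F_1^+ = q^{-1/24}f(q)$ and the theta expansion~\eqref{eq:definition-of-G} of $G_3$ yields the left-hand side of \eqref{eq:thm:relations-of-mock-theta-coefficients:second-relation} as the coefficient of $q^{n+1/8}$. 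The right-hand side is then obtained by repeating the computation of Lemma~\ref{la:mock-theta-holomorphic-projection-coefficients} with $G_3$ in place of the second $G_1$; the resulting pairs of summation indices $(p, q) \in \ZZ^2$ satisfy $(2(p+q)+1)(6(q-p)+1) = 8n+1$, matching the theorem's divisor condition $ab = 8n+1$ after the affine substitution $a = 2(N+\td N)+1$, $b = 6(\td N - N)+1$ with $N = p$ and $\td N = q$.

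Relations \eqref{eq:thm:relations-of-mock-theta-coefficients:third-relation} and \eqref{eq:thm:relations-of-mock-theta-coefficients:fourth-relation} will be treated in the same spirit using tensor slots of the form $F_i \otimes G_j$ with $i \in \{2, 3\}$. The key new feature is that $F_2^+ = 2q^{1/3}\omega(q^{1/2})$ and $F_3^+ = 2q^{1/3}\omega(-q^{1/2})$, so that suitable combinations of these isolate Fourier coefficients of $\omega$ by parity and produce the filter $c_h(\omega;\,\cdot)$ together with the signs $(-1)^{1+h}$ and $(-1)^{h}$ appearing in the theorem. Relation \eqref{eq:thm:relations-of-mock-theta-coefficients:third-relation} will come from the off-diagonal slots $F_2 \otimes G_1$ and $F_3 \otimes G_1$, where $E$ again vanishes and hence contributes no divisor sum. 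Relation \eqref{eq:thm:relations-of-mock-theta-coefficients:fourth-relation} will come from the diagonal slots $F_2 \otimes G_2$ and $F_3 \otimes G_3$, the remaining non-zero entries of $E$; expanding the corresponding $E_2(\tau)$, $E_2^{[2]}(\tau)$, and $E_2^{[2]}(\tau/2)$ in divisor sums produces the term $(-1)^{h} R_n$. In each case the holomorphic-projection piece simplifies through the same hypergeometric identity as before, and the natural parametrization by $(p, q) \in \ZZ^2$ arising from the theta components of $G$ yields the divisor conditions $ab = 8n+3$ and $ab = 2n$ of the theorem after an affine change of variables into $(N, \td N)$.

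The main obstacle will not be mathematical but organizational: correctly identifying which tensor slot (or linear combination of slots) gives each of the three relations, carefully tracking the parities induced by $\omega(\pm q^{1/2})$ and by $G_2$ versus $G_3$ against the filter $c_h$, and matching the integer parametrization naturally coming out of the hypergeometric simplification to the theorem's $(N, \td N)$ and divisor conditions. No new automorphic input is needed beyond the ingredients assembled in Section~\ref{sec:holomorphic-projection} and at the start of Section~\ref{sec:relations}.
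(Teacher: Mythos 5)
Your proposal follows the paper's own route exactly: the published proof of Theorem~\ref{thm:relations-of-mock-theta-coefficients-2} likewise reads off the remaining components of~\eqref{eq:restate-of-hol(FG)}, extends Lemma~\ref{la:mock-theta-holomorphic-projection-coefficients} to the other products $G_i^* G_j$ via the same hypergeometric identity, and uses Corollary~\ref{cor:mock-theta-identification-as-modular-form} to decide which slots pick up a divisor-sum contribution from $E$. Your factorization $(2(N+\td N)+1)(6(\td N-N)+1)=8n+1$ for~\eqref{eq:thm:relations-of-mock-theta-coefficients:second-relation} is the correct change of variables, and your single-slot choice $F_0^+G_2$ (in the paper's zero-based indexing) is legitimate there, since the convention $c(f;x)=0$ for $x\notin\ZZ$ makes it equivalent to the paper's combinations $F_0^+(\pm G_1-G_2)$.

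One concrete correction is needed for~\eqref{eq:thm:relations-of-mock-theta-coefficients:fourth-relation}: the two diagonal slots alone do not suffice. In the paper's notation, the product $F_1^+G_1$ is supported on exponents $\tfrac12(j+3m^2+2m+1)$, where $j$ indexes $c(\omega;j)$ and $m$ the theta series; for a fixed power of $q$ the parity of $j$ varies with $m$, so no single slot isolates the filter $c_h(\omega;\cdot)$. You must take the four combinations $(F_1^+\pm F_2^+)(\pm G_1-G_2)$, i.e.\ the fifth, sixth, eighth, \emph{and} ninth components together (this is precisely what the paper does): $F_1^+\pm F_2^+=2q^{1/3}\big(\omega(q^{1/2})\pm\omega(-q^{1/2})\big)$ selects the parity $h$, while $\pm G_1-G_2$ selects the parity of the theta index. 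The term $R_n$ then arises from the corresponding signed combinations of the fifth and ninth entries of $E$ (the sixth and eighth entries of $E$ vanish). With that bookkeeping fixed, your outline matches the paper's argument.
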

\begin{proof} 
The proof is completely analogous to the proof of Theorem~\ref{thm:relations-of-mock-theta-coefficients}.  Write $F^+=\rT(F_0^+,F_1^+,F_2^+)$ and again $G=\rT(G_0,G_1,G_2)$.  It is easy to extend Lemma~\ref{la:mock-theta-holomorphic-projection-coefficients} in each of the cases below.
\begin{enumerate}[(a)]

\item
Relation~\eqref{eq:thm:relations-of-mock-theta-coefficients:second-relation} follows from considering the second and third component of \eqref{eq:restate-of-hol(FG)}, and more precisely from considering:  $F_0^+ (G_1 - G_2)$ (for $n \in \ZZ$) and $F_0^+ (- G_1 - G_2)$ (for $n \in \ZZ + \frac{1}{2}$).

\item
Relation~\eqref{eq:thm:relations-of-mock-theta-coefficients:third-relation} follows from considering the fourth and seventh component of \eqref{eq:restate-of-hol(FG)}, and more precisely from considering: $(F_1^+ + F_2^+) G_0$ (for $h = 0$) and $(F_1^+ - F_2^+) G_0$ (for $h = 1$).

\item
Relation~\eqref{eq:thm:relations-of-mock-theta-coefficients:fourth-relation} follows from considering the fifth, sixth, eighth, and ninth component of \eqref{eq:restate-of-hol(FG)}, and more precisely from considering: $(F_1^+ + F_2^+) (G_1 - G_2)$ (for $h = 0$, $n \in \ZZ + \frac{1}{2}$), $(F_1^+ - F_2^+) (G_1 - G_2)$ (for $h = 1$, $n \in \ZZ$), $(F_1^+ + F_2^+) (- G_1 - G_2)$ (for $h = 0$, $n \in \ZZ$), and $(F_1^+ - F_2^+) (-G_1 - G_2)$ (for $h = 1$, $n \in \ZZ + \frac{1}{2}$).
\qedhere
\end{enumerate}
\end{proof}

\ignore{
\begin{remark}
\label{rm:class-number-relations}
In the introduction we mentioned that the Hurwitz class number relations~\eqref{eq:Hurwitz-relations} can be proved in the same way as Theorems~\ref{thm:relations-of-mock-theta-coefficients} and~\ref{thm:relations-of-mock-theta-coefficients-2}.  Note that
\begin{gather*}
  h(\tau)
:=
  \sum_{n = 0}^\infty H(n)\, q^n
  -
  (4 \pi)^{\frac{1}{2}}
  \sum_{n \in \ZZ} \frac{-|n|}{16 \pi}\, \Gamma(-\frac{1}{2}, 4 \pi n^2 y)\, q^{-n^2}
\end{gather*}
is a harmonic Maass form of weight~$\frac{3}{2}$ on the usual congruence subgroup of level~$4$~(see also~\cite{Hi-Za-Invent76}).  We find that $F: = F^+ + F^- \in \bbM_{\frac{3}{2}}(\rho_2)$, where
\begin{align*}
  F^+(\tau)
&:=
  \rT \Big( \sum_{n \equiv 0 \pmod{4}}\hspace{-1em} H(n) q^{\frac{n}{4}},\,
            \sum_{n \equiv -1 \pmod{4}}\hspace{-1em} H(n) q^{\frac{n}{4}} \Big)
\text{,}
\\
  F^-(\tau)
&:=
  - (4 \pi)^{\frac{1}{2}}\;
  \rT\! \Big(
  \sum_{n \in 2\ZZ} -(8 \pi)^{-1}\, \frac{|n|}{2}\,
                  \Gamma(-\frac{1}{2}, \pi n^2 y)\, q^{-\frac{n^2}{4}},\,
  \sum_{n \in 2\ZZ + 1} \hspace{-.5em} -(8 \pi)^{-1}\, \frac{|n|}{2}\,
                      \Gamma(-\frac{1}{2}, \pi n^2 y)\, q^{-\frac{n^2}{4}}
  \Big)
\end{align*}
and
\begin{gather*}
  \rho_2(T)
:=
  \begin{pmatrix}
    1 & 0 \\ 0 & -i
  \end{pmatrix}
\text{,}
\quad
  \rho_2(S)
:=
  \frac{1 + i}{2}
  \begin{pmatrix}
    1 & 1 \\ 1 & -1
  \end{pmatrix}
\text{.}
\end{gather*}

The representation $\rho_2 \otimes \ov{\rho_2}$ is isomorphic to $\sigma'_1 \oplus \sigma'_3$, where $\sigma'_1$ and $\sigma'_3$ are irreducible subrepresentations whose representation spaces are spanned by the columns of the matrices
\begin{gather*}
  \begin{pmatrix}
  1 \\ 0 \\ 0 \\ 1
  \end{pmatrix}
\text{,}\;
\text{and}\;
  \begin{pmatrix}
    1 & 0 & 0 \\
    0 & 1 & 0 \\
    0 & 0 & 1 \\
    -1 & 0 & 0
  \end{pmatrix}
\text{, respectively.}
\end{gather*}

For brevity, we omit further details, and we only point out that applying our method with $F$ and $G := \xi_{\frac{3}{2}}(F)$ yields~\eqref{eq:Hurwitz-relations}.
\end{remark}
}

\bibliographystyle{amsalpha}
\bibliography{Lit}

\providecommand{\bysame}{\leavevmode\hbox to3em{\hrulefill}\thinspace}
\providecommand{\MR}{\relax\ifhmode\unskip\space\fi MR }
\providecommand{\MRhref}[2]{%
  \href{http://www.ams.org/mathscinet-getitem?mr=#1}{#2}
}
\providecommand{\href}[2]{#2}
\begin{thebibliography}{GMO13}

\bibitem[ARZ13]{And-Rho-Zwe-ANT13}
G.~Andrews, R.~Rhoades, and S.~Zwegers, \emph{Modularity of the concave
  composition generating function}, Algebra {N}umber {T}heory \textbf{{\bf 7}}
  (2013), no.~9, 2103--2139.

\bibitem[BF04]{B-F-Duke04}
J.~Bruinier and J.~Funke, \emph{On two geometric theta lifts}, Duke {M}ath.\
  {J}. \textbf{{\bf 125}} (2004), no.~1, 45--90.

\bibitem[BO06]{B-O-Invent06}
K.~Bringmann and K.~Ono, \emph{The $f(q)$ mock theta function conjecture and
  partition ranks}, Invent.\ {M}ath. \textbf{{\bf 165}} (2006), no.~2,
  243--266.

\bibitem[Bor99]{Borcherds-Gross-Kohnen-Zagier}
R.~Borcherds, \emph{The {G}ross-{K}ohnen-{Z}agier theorem in higher
  dimensions}, Duke {M}ath.\ {J}. \textbf{{\bf 97}} (1999), no.~2, 219--233.

\bibitem[Bru02]{B-LNM}
J.~Bruinier, \emph{Borcherds products on {O}(2, {$l$}) and {C}hern classes of
  {H}eegner divisors}, \rm Lecture {N}otes in {M}ath., vol.~{\bf 1780},
  Springer, 2002.

\bibitem[BYZ04]{B-Y-Z-Crelle04}
B.~Berndt, A.~Yee, and A.~Zaharescu, \emph{New theorems on the parity of
  partition functions}, J.\ {R}eine\ {A}ngew.\ {M}ath. \textbf{{\bf 566}}
  (2004), 91--109.

\bibitem[CDH12]{Ch-Du-Ha}
M.~Cheng, J.~Duncan, and J.~Harvey, \emph{{U}mbral moonshine}, Preprint, 2012.

\bibitem[{\relax DLMF}]{NIST}
\emph{{{NIST} {D}igital {L}ibrary of {M}athematical {F}unctions}},
  http://dlmf.nist.gov/, Release 1.0.6 of 2013-05-06.

\bibitem[DMZ12]{DMZ}
A.~Dabholkar, S.~Murthy, and D.~Zagier, \emph{Quantum black holes and mock
  modular forms}, Preprint, 2012.

\bibitem[EOT11]{Egu-Oog-Tac}
T.~Eguchi, H.~Ooguri, and Y.~Tachikawa, \emph{Notes on the ${K}3$ surface and
  the {M}athieu group ${M}24$}, Exp.\ Math. \textbf{{\bf 20}} (2011), no.~1,
  91--96.

\bibitem[GMO13]{Ono-Gri-Mal-Forum}
M.~Griffin, A.~Malmendier, and K.~Ono, \emph{${SU}(2)$ {D}onaldson invariants
  for the projective plane}, To appear in Forum Mathematicum, 2013.

\bibitem[GR07]{GR-tables}
I.S. Gradshteyn and I.M. Ryzhik, \emph{Table of integrals, series, and
  products}, seventh ed., Elsevier/{A}cademic {P}ress, {A}msterdam, 2007.

\bibitem[GZ86]{G-Z-Invent86}
B.~Gross and D.~Zagier, \emph{Heegner points and derivatives of {$L$}-series},
  Invent.\ {M}ath. \textbf{{\bf 84}} (1986), no.~2, 225--320.

\bibitem[Hur85]{Hur-MathAnn1885}
A.~Hurwitz, \emph{Ueber {R}elationen zwischen {C}lassenanzahlen bin\"arer
  quadratischer {F}ormen von negativer {D}eterminante}, Math.\ {A}nn.
  \textbf{{\bf 25}} (1885), no.~2, 157--196.

\bibitem[HZ76]{Hi-Za-Invent76}
F.~Hirzebruch and D.~Zagier, \emph{Intersection numbers of curves on {H}ilbert
  modular surfaces and modular forms of {N}ebentypus}, Invent.\ {M}ath.
  \textbf{{\bf 36}} (1976), 57--113.

\bibitem[Iwa02]{Iw-2002}
H.~Iwaniec, \emph{Spectral methods of automorphic forms}, \rm {G}raduate
  {S}tudies in {M}athematics, vol.~{\bf 53}, {AMS}, 2002.

\bibitem[KZ95]{Kan-Z}
M.~Kaneko and D.~Zagier, \emph{{\it A generalized {J}acobi theta function and
  quasimodular forms}, {\rm in: {t}he moduli space of curves ({t}exel {i}sland,
  1994)}}, \rm {P}rogr. {M}ath. {\bf 129}, pp.~165--172, Birkh\"auser, 1995.

\bibitem[MO12a]{Ono-Mal-CNTP12}
A.~Malmendier and K.~Ono, \emph{Moonshine for ${M}_{24}$ and {D}onaldson
  invariants of $\mathbb{CP}^2$}, Commun.\ {N}umber {T}heory {P}hys.
  \textbf{{\bf 6}} (2012), no.~4, 759--770.

\bibitem[MO12b]{Ono-Mal-GeomTop12}
\bysame, \emph{${SO}(3)$-{D}onaldson invariants of $\mathbb{CP}^2$ and mock
  theta functions}, Geom.\ {T}opol. \textbf{{\bf 16}} (2012), no.~3,
  1767--1833.

\bibitem[NRS98]{Ni-Ru-Sa-JNT98}
J.~Nicolas, I.~Ruzsa, and A.~S{\'a}rk{\"o}zy, \emph{On the parity of additive
  representation functions}, J.\ Number Theory \textbf{{\bf 73}} (1998), no.~2,
  292--317.

\bibitem[Ono09]{Ono-CDM08}
K.~Ono, \emph{Unearthing the visions of a master: harmonic {M}aass forms and
  number theory}, Current developments in mathematics, 2008, Int.\ {P}ress,
  {S}omerville, {MA}, 2009, pp.~347--454.

\bibitem[Sko08]{Sko-Weil}
N-P. Skoruppa, \emph{Jacobi forms of critical weight and {W}eil
  representations}, Modular forms on {S}chiermonnikoog, Cambridge {Univ}.\
  Press, Cambridge, 2008, pp.~239--266.

\bibitem[Stu80]{Sturm-holproj}
J.~Sturm, \emph{Projections of {$C^{\infty }$} automorphic forms}, Bull. Amer.
  Math. Soc. (N.S.) \textbf{{\bf 2}} (1980), no.~3, 435--439.

\bibitem[Zag09]{Z-Bourbaki}
D.~Zagier, \emph{Ramanujan's mock theta functions and their applications
  [d'apr\`es {Z}wegers and {B}ringmann-{O}no]}, Ast\'erisque (2009), no.~326,
  Exp.\ {N}o.\ 986, vii--viii, 143--164 (2010), S{\'e}minaire {B}ourbaki.
  {V}ol. 2007/2008.

\bibitem[Zwe01]{Zwe-Contemp01}
S.~Zwegers, \emph{Mock $\vartheta$-functions and real analytic modular forms},
  {$q$}-series with applications to combinatorics, number theory, and physics
  ({U}rbana, {IL}, 2000), Contemp.\ {M}ath., vol.~{\bf 291}, Amer.\ {M}ath.\
  {S}oc., Providence, {RI}, 2001, pp.~269--277.

\bibitem[Zwe02]{Zwe-thesis}
\bysame, \emph{Mock theta functions}, Ph.D. thesis, Universiteit Utrecht, The
  Netherlands, 2002.

\end{thebibliography}

\end{document}